\renewenvironment{proof}[1][\proofname]
{\par\pushQED{\qed}
	\normalfont\topsep6\p@\@plus6\p@\relax\trivlist
	\item[\hskip\labelsep\bfseries#1\@addpunct{.}]
	\ignorespaces}
{\popQED\endtrivlist\@endpefalse}
\newtheorem{theo}{Theorem}[section]
\newtheorem{prop}[theo]{Proposition}
\newtheorem{lemma}[theo]{Lemma}
\newtheorem{coro}[theo]{Corollary}
\newtheorem{claim}[theo]{Claim}
\newtheorem{remark}[theo]{Remark}
\newtheorem*{remark*}{Remark}
\newcommand{\FF}{{\cal F}}
\newcommand{\UU}{{\cal U}}
\newcommand{\R}{{\mathbb R}}
\newcommand{\floor}[1]{\left\lfloor{#1}\right\rfloor}
\newcommand{\ceil}[1]{\left\lceil #1 \right\rceil}
\renewcommand{\wp}{\mathrm{span}}
\renewcommand{\a}{\alpha}
\renewcommand{\d}{\delta}
\newcommand{\g}{\gamma}
\newcommand{\D}{\Delta}
\newcommand{\sub}{\subseteq}
\newcommand{\Ex}{\mathbb{E}}
\newcommand{\N}{\mathbb{N}}
\newcommand{\HH}{{\cal H}}
\newcommand{\C}{\mu}
\DeclareMathOperator{\trace}{Tr}
\date{}
\title{Traces of Hypergraphs}
\author{
Noga Alon\thanks{Department of Mathematics, Princeton University, 
Princeton, New Jersey, USA and
Center for Mathematical Sciences and Applications, 
Harvard University, Cambridge, Massachusetts, USA.
Email: \texttt{nalon@math.princeton.edu}.}
\and
Guy Moshkovitz\thanks{Center for Mathematical Sciences and Applications, Harvard University, Cambridge, Massachusetts, USA. Email: \texttt{guymoshkov@gmail.com}.}
\and
Noam Solomon\thanks{Center for Mathematical Sciences and Applications, Harvard University, Cambridge, Massachusetts, USA. Email: \texttt{noam.solom@gmail.com}.}
}
\begin{document}
\maketitle

\begin {abstract} 
Let $\trace(n,m,k)$ denote the largest number of distinct projections onto $k$ coordinates guaranteed in any family of $m$ binary vectors of length $n$.
The classical Sauer-Perles-Shelah Lemma implies that $\trace(n, n^r, k) = 2^k$ for $k \le r$.
While determining $\trace(n,m,k)$ precisely for general $k$ and $m$ seems
hopeless, estimating it remains a widely open problem with connections to 
important questions in computer science and combinatorics.
For example, an influential result of Kahn-Kalai-Linial gives non-trivial bounds on	 $\trace(n, m, k)$ for $k=\Theta(n)$ and $m = \Theta(2^n)$.
Here we prove that, for $r,\a^{-1} \le n^{o(1)}$, it holds that
$\trace(n,n^r,\a n) = n^{\C(1+o(1))}$ with $$\C=\frac{r+1-\log(1+\a)}{2-\log(1+\a)}.$$
Thus, we (essentially) determine $\trace(n,m,k)$ for $k=\Theta(n)$ and all $m$ up to $2^{n^{o(1)}}$.


For the proof we establish a ``sparse'' version of another 
classical result, the Kruskal-Katona Theorem, 
which gives a stronger guarantee when the hypergraph does not 
induce dense sub-hypergraphs.
Furthermore, we prove that the parameters in our sparse Kruskal-Katona 
theorem are essentially best possible. 
Finally, we mention two simple applications which may be 
of independent interest. 
\end {abstract}

\section{Introduction}\label{se:intro}


For a hypergraph (or a set system) $\FF$, the \emph{trace} of $\FF$ on a vertex subset $I$ is defined as the set of projections of the edges of $\FF$ onto $I$, namely,
$\FF_I = \{e \cap I \,\colon\, e \in \FF \}$.
The \emph{shatter function}, or \emph{trace function}, of $\FF$ is 
$\trace(\FF,k) =\max_{I} \big|\FF_I\big|$ with $I$ a set $k$ vertices.
The focus of this paper is the following important extremal function; for integers $n \ge k$ and $0 \leq m \leq 2^n$, let $\trace(n,m,k)$ denote 
the largest number of distinct projections onto $k$ vertices guaranteed in any $n$-vertex $m$-edge hypergraph:
$$\trace(n,m,k) = \min_{\substack{\FF \sub 2^{[n]}\\|\FF|=m}} \trace(\FF,k) = \min_{\substack{\FF \sub 2^{[n]}\\|\FF|=m}} \, \max_{\substack{I \sub [n]\\|I|=k}} \big|\FF_I\big| .$$ 

There is a considerable number of results, in various areas of discrete mathematics, 
determining or estimating this function for certain values
of the parameters. The most famous result is arguably the Sauer-Perles-Shelah Lemma (\cite{Sa}, \cite{Sh}, see also
Vapnik and Chervonenkis~\cite{VC} for a slightly weaker estimate).
\begin {theo}[Sauer-Perles-Shelah]
\label{th:sps}
$\trace(n,m,k)=2^k$ for $m > \sum_{i=0}^{k-1} \binom{n}{i}$.
\end {theo}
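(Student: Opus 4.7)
The plan is to prove the well-known stronger inequality that any family $\FF \sub 2^{[n]}$ \emph{shatters} at least $|\FF|$ subsets of $[n]$, where a set $S \sub [n]$ is shattered by $\FF$ if $\FF_S = 2^S$. Once this is established, the theorem follows by a short counting argument. I would also use the immediate observation that every subset of a shattered set is itself shattered.

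To prove the inequality I would induct on $n$, with the trivial base case $n=0$. For the inductive step, split $\FF$ according to the last coordinate:
$$\FF_0 = \{F \in \FF : n \notin F\}, \qquad \FF_1 = \{F \setminus \{n\} : F \in \FF,\, n \in F\},$$
both viewed as families on $[n-1]$, so $|\FF| = |\FF_0| + |\FF_1|$. Writing $\text{sh}(\cdot)$ for the collection of shattered subsets, the induction hypothesis yields $|\text{sh}(\FF_i)| \ge |\FF_i|$ for $i=0,1$. Two observations drive the step: (i) any set shattered by $\FF_0$ or by $\FF_1$ is also shattered by $\FF$, viewed as a subset of $[n]$ avoiding $n$; and (ii) any $S \sub [n-1]$ shattered by \emph{both} $\FF_0$ and $\FF_1$ yields an additional shattered set $S \cup \{n\}$, since every pattern on $S \cup \{n\}$ can be realized by a suitable set from $\FF_0$ (when $n$ is absent from the pattern) or from $\FF_1$ (when $n$ is present). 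Combining these via inclusion-exclusion,
$$|\text{sh}(\FF)| \;\ge\; |\text{sh}(\FF_0) \cup \text{sh}(\FF_1)| + |\text{sh}(\FF_0) \cap \text{sh}(\FF_1)| \;=\; |\text{sh}(\FF_0)| + |\text{sh}(\FF_1)| \;\ge\; |\FF|,$$
closing the induction.

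For the theorem, assume $m > \sum_{i=0}^{k-1}\binom{n}{i}$. Since there are exactly $\sum_{i=0}^{k-1}\binom{n}{i}$ subsets of $[n]$ of size strictly less than $k$, the bound $|\text{sh}(\FF)| \ge m$ forces $\FF$ to shatter some set $S$ with $|S| \ge k$. Picking any $k$-element $I \sub S$, the set $I$ is also shattered, so $|\FF_I| = 2^k$.

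The only subtle point is arranging the induction so that the contribution from (ii) exactly compensates for the overlap in (i); once the inclusion-exclusion identity above is in place, the rest is immediate counting, and no genuine obstacle remains.
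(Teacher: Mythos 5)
Your argument is correct. Note, however, that the paper does not give a proof of this statement at all: it is a classical theorem cited to Sauer, Shelah, and Vapnik--Chervonenkis, so there is no in-paper proof to compare against. What you prove is in fact the well-known strengthening (often attributed to Pajor, or to Aharoni and Holzman) that any family $\FF\subseteq 2^{[n]}$ shatters at least $|\FF|$ sets, via induction on $n$ with the split $\FF=\FF_0\sqcup(\FF_1+\{n\})$ and the inclusion--exclusion identity
\[
|\mathrm{sh}(\FF)|\;\ge\;|\mathrm{sh}(\FF_0)\cup\mathrm{sh}(\FF_1)|+|\mathrm{sh}(\FF_0)\cap\mathrm{sh}(\FF_1)|\;=\;|\mathrm{sh}(\FF_0)|+|\mathrm{sh}(\FF_1)| ,
\]
where the extra shattered sets $S\cup\{n\}$ coming from $\mathrm{sh}(\FF_0)\cap\mathrm{sh}(\FF_1)$ are disjoint (all contain $n$) from those in $\mathrm{sh}(\FF_0)\cup\mathrm{sh}(\FF_1)$ (none contain $n$), so the count is clean. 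Your observations (i) and (ii) are both verified by direct unwinding, and the final pigeonhole step, together with the hereditary nature of shattering, yields $|\FF_I|=2^k$ for some $k$-set $I$. This is a standard and complete proof; no gaps.
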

The \emph{VC-dimension} of a hypergraph $\FF$ is the largest $k$ so that $\trace(\FF, k) = 2^k$, i.e., it is the largest number $k$ so that $\FF$ has a full projection on some $k$ vertices. The VC-dimension is a basic combinatorial measure of the \emph{complexity} of a hypergraph; understanding the shatter function beyond the case of full projections is a very natural direction. Shatter functions and the VC-dimension are extensively studied in combinatorial and computational geometry, as well as in machine learning (see the survey of Matou\v sek~\cite{Mat} for several geometric and algorithmic applications of shatter functions, and the survey of Angulin~\cite{An} for the role VC-dimension is playing in computational learning theory).

In~\cite{Bo}, Bondy proved that $\trace(n,n,n-1)=n$, and a
remark in \cite{Al} and in \cite{Fr} is that
$$
\trace(n,m,3)=7\, \text{ for } \,m=1+n+[n^2/4]+1,
$$
and the same argument implies that
determining the smallest $m$ for which $\trace(n,m,4)=15$ is equivalent
to determining the maximum possible number of edges of a
$3$-uniform hypergraph  on $n$ vertices with no complete hypergraph
on $4$ vertices---a well-known open problem of Tur\'an. Additional results
that can all be formulated in terms of the function $\trace(n,m,k)$ 
appear in~\cite{BR}, \cite{KKL}, \cite{BKK}, \cite{CGN} and more.

Recently, Bukh and Goaoc 
were able to estimate $\trace(n,n^{O(1)},k)$ for constant values of $k$ that are not too small (and also improved an earlier lower bound of \cite{CGN}).
%
In the other extreme regime, a classical paper of Kahn et al.~\cite{KKL} proves that for every $0<\a, \beta<1$,
$$\trace(n, \beta 2^{n}, \a n) \ge (1-n^{-c})2^{\a n},$$
where $c=c(\a,\beta)>0$ depends only on $\a,\beta$.
Benny Chor conjectured in the 80s that one can in fact make the error term exponentially rather than polynomially small in $n$.
This conjecture was recently disproved by Bourgain et al.~\cite{BKK}. In fact, Bourgain et al.\ prove several additional
results, in particular strengthening those of~\cite{KKL}.
%
%


In~\cite{BR}, Bollob\'as and Radcliffe considered the case where $m$ is polynomial and $k$ is linear. For the lower bound they were able to prove the following.
\begin{theo} [Bollob\'as and Radcliffe{~\cite[Theorem 7]{BR}}]
	\label{th:brlb}
	For constants $r\ge 2$ and $0 < \a \le 1$ it holds that $\trace(n,\,n^r,\,\a n) \ge \Omega(n^{\lambda r})$
	with\footnote{Here $H(x) = -x\log(x) - (1-x)\log(1-x)$ is the binary entropy function, and the logarithms are in base $2$.}
	$$\lambda =
	\begin{cases}
	\log(1+\alpha) & \alpha \in [\sqrt 2 -1, 1]\\
	\log(1+\alpha)/H(\log(1+\alpha)) & \alpha \in (0, \sqrt 2 -1)
	\end{cases}$$
\end{theo}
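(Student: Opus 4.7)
The plan is to combine the Kruskal--Katona theorem with a double-counting argument over a random choice of the projection set $I$. First, by pigeonhole on the $n+1$ possible edge sizes, $\FF$ contains a $k$-uniform subfamily of size $\Omega(n^r/n)=\Omega(n^{r-1})$ for some $k$; for constant $r\ge 2$, this loss of a factor of $n$ in cardinality only rescales the target exponent $\lambda r$ by a bounded amount, absorbed into the $\Omega$-constant (or one may, with more care, split $\FF$ into its uniform layers to preserve $r$ up to $o(1)$). I then assume $\FF$ is $k$-uniform and write $|\FF|=\binom{x}{k}$, so that Kruskal--Katona yields $|\partial^{k-j}\FF|\ge\binom{x}{j}$ for all $0\le j\le k$, where $\partial^{k-j}\FF$ is the family of $j$-subsets contained in some edge of $\FF$.

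To translate this shadow estimate into a trace bound, let $I\sub[n]$ be uniformly random of size $\ell=\a n$. The projection $e\mapsto e\cap I$ sends $\FF$ onto $\FF_I$ with fibers of size at most $\binom{n-\ell}{k-j}$ above any $T$ of cardinality $j$, whence
\[|\FF_I^{(j)}|\ge\frac{\bigl|\{e\in\FF\colon|e\cap I|=j\}\bigr|}{\binom{n-\ell}{k-j}}.\]
Averaging over $I$ and invoking the hypergeometric identity $\binom{k}{j}\binom{n-k}{\ell-j}/\binom{n}{\ell}=\binom{\ell}{j}\binom{n-\ell}{k-j}/\binom{n}{k}$, the $\binom{n-\ell}{k-j}$ factors cancel and give $\Ex_I|\FF_I^{(j)}|\ge|\FF|\binom{\ell}{j}/\binom{n}{k}$. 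Summing over the feasible range of $j$ yields a lower bound on $\Ex_I|\FF_I|$, and hence on $\max_I|\FF_I|$.

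The last step is to optimize $k$ and $j$. Using $\binom{x}{k}=|\FF|=\Theta(n^r)$ to eliminate $x$ in terms of $k$, and Stirling's approximation, the logarithm of the above lower bound becomes a function of $k/n$ and $j/k$ to be maximized subject to $\max(0,k-(n-\ell))\le j\le\min(k,\ell)$. For $\a\ge\sqrt 2 -1$ the maximizer is an interior critical point yielding the exponent $\log(1+\a)$; for $\a<\sqrt 2 -1$ the constraint $j\le k$ becomes active, pushing the optimum to the boundary where an entropic correction $1/H(\log(1+\a))$ appears in $\lambda$. I expect the main obstacle to be the tightness of the fiber-size bound $\binom{n-\ell}{k-j}$: it is generally quite loose except for families in which the projection has many preimages at every level. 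To recover the sharp exponent $\lambda$, this bound likely needs to be tightened via a shifting/compression step on $\FF$ (making fibers smaller), or by applying Kruskal--Katona directly to the projected family $\FF_I$ in place of $\FF$. The form $\lambda=\log(1+\a)$ has the flavor of an ``include-or-exclude-one-vertex'' recursion, mirroring Kruskal--Katona's own proof, which further supports this line of attack.
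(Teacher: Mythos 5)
The paper does not prove Theorem~\ref{th:brlb}; it cites it from Bollob\'as--Radcliffe~\cite{BR}. The closest in-paper analogue is the stronger Theorem~\ref{th:main}, whose proof does follow the skeleton you propose (pass to a popular uniform layer, average over a random projection set), so I will evaluate you against that.

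Your proposal has a real gap in the central step. After invoking Kruskal--Katona you never actually use the shadow bound $\bigl|\binom{\FF}{j}\bigr|\ge\binom{x}{j}$: the inequality you write instead,
\[
|\FF_I^{(j)}|\ge\frac{\bigl|\{e\in\FF\colon|e\cap I|=j\}\bigr|}{\binom{n-\ell}{k-j}},
\]
is a pure fiber-counting bound that replaces the true number of preimages of a trace by the crude maximum $\binom{n-\ell}{k-j}$. After the hypergeometric cancellation this gives
$\Ex_I|\FF_I^{(j)}|\ge |\FF|\binom{\ell}{j}/\binom{n}{k}$,
and the factor $|\FF|/\binom{n}{k}$ is exactly what kills the argument: it is tiny whenever $\FF$ is far from a complete $k$-graph, which is exactly the case once the adversary puts mass in a moderately large layer. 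You flag this yourself (``I expect the main obstacle\dots''), but the fix is not a sharper fiber estimate --- it is to use the \emph{shadows} directly. Concretely, pass to the down-closure $\FF^{\downarrow}$; since down-closed families have trace equal to the induced subfamily, $\Ex_I|\FF^{\downarrow}_I|=\sum_{j}\bigl|\binom{\FF}{j}\bigr|\cdot\Pr[\text{a fixed }j\text{-set lies in }I]\approx\sum_j\bigl|\binom{\FF}{j}\bigr|\a^{j}$, and \emph{here} Kruskal--Katona enters, lower-bounding each term by $\binom{x}{j}\a^j$. This is exactly how the paper's Corollary~\ref{co:exptr} is used inside Theorem~\ref{th:main}, and one checks that the shadow estimate dominates your fiber estimate (e.g.\ $(4\a)^k\le(1+\a)^{2k}$ for the clique $\FF=\binom{[2k]}{k}$). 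Two secondary issues: (i) pigeonholing over the $n+1$ edge sizes loses a factor $n$, i.e.\ a polynomial in the exponent; you must first reduce to a down-closed family so there are only $O(\log|\FF|)$ populated levels, as in the paper's proof of Theorem~\ref{th:main}; and (ii) the entropic case $\a<\sqrt{2}-1$ genuinely requires the optimization over $k$ and $x$ to hit the boundary of the feasible region, which your sketch gestures at but does not carry out --- in particular the transition at $\sqrt{2}-1$ is where $\log(1+\a)=\tfrac12$ and the popular layer stops being near $x/2$, a constraint that never appears in your calculation.
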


As for the upper bound, it would seem that among hypergraphs on $n$ vertices with a given number of edges $m$, a hypergraph $\FF$ with $\trace(\FF,k) = \trace(n,m,k)$ should be very symmetric, when $k$ is not too small or too large. A natural candidate for such an extremal hypergraph is thus the hypergraph containing all edges up to the appropriate size.
Bollob\'as and Radcliffe were able 
to show that this is in fact not the case. 
\begin {theo} [Bollob\'as-Radcliffe{~\cite[Theorem 11]{BR}}]
\label{th:brup}
For every constant integer $r \ge 2$, 
$$\trace\bigg(n,\, \sum_{i=0}^r \binom n i,\, n/2\bigg) \le o(n^r) .$$
\end {theo}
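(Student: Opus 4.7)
We construct an explicit family $\FF\sub 2^{[n]}$ with $|\FF|=\sum_{i=0}^r\binom{n}{i}$ satisfying $|\FF_I|=o(n^r)$ for every $I\sub [n]$ with $|I|=n/2$. First, I would observe that the ``natural'' candidate $\FF_0=\binom{[n]}{\le r}$ is insufficient: its trace on any such $I$ is precisely $\binom{n/2}{\le r}=(1+o(1))\,n^r/(2^r\, r!)$, which is $\Theta(n^r)$ rather than $o(n^r)$. Hence only a super-constant factor of improvement over this natural bound is required, inviting a mildly asymmetric construction.

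The plan is to obtain $\FF$ by a structural modification of $\FF_0$: choose a structured subcollection $R\sub\binom{[n]}{r}$ to remove, and add back the same number $|R|$ of sets of different sizes (say $r\pm 1$) in a structured way so that (a) the total edge count is unchanged and (b) under every $n/2$-projection many removed $r$-sets are erased from the trace while the added sets contribute few new projections thanks to built-in collisions. Natural candidates for the pool of added sets are sunflowers, affine/algebraic varieties, or similar families whose $n/2$-projections collapse heavily: a sunflower with core $C$ and many petals forces all of its members to share the same projection whenever the petals lie outside $I$, so one sunflower of $s$ edges may contribute only a handful of distinct projections.

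For the trace analysis I would decompose $\FF$ by edge size. The sets of size strictly less than $r$ contribute at most $\binom{n/2}{\le r-1}=O(n^{r-1})=o(n^r)$. The sets of size $r$ remaining in $\FF$ (namely $\binom{[n]}{r}\setminus R$) contribute at most $\binom{n/2}{r}$ minus an adjustment accounting for projections killed by removing $R$. The ``replacement'' sets contribute a sum bounded using the collision structure of the pool they are drawn from. With parameters tuned so that the projections saved (of order $|R|$) outnumber any new projections contributed by the replacements by a super-constant factor, one obtains $|\FF_I|=o(n^r)$.

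The main obstacle will be uniformity: the modification must reduce the trace simultaneously for all $\binom{n}{n/2}$ choices of $I$. A fixed structured $R$ and replacement pool may be defeated by an adversarial $I$ aligned with the structure (for instance, an $I$ that contains every sunflower core, blocking the collapse). The standard remedy is a probabilistic construction—randomize $R$ and the replacement pool and apply a union bound, requiring the failure probability per $I$ to be smaller than $\binom{n}{n/2}^{-1}$—or, alternatively, a deterministic construction using many ``overlapping'' replacement structures so that no $I$ of size $n/2$ can simultaneously escape the savings from all of them. Careful bookkeeping to keep $|\FF|$ exactly $\sum_i\binom{n}{i}$ while preserving the $o(n^r)$ trace bound then completes the proof.
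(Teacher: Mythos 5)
Your proposal is a plan, not a proof: the central construction (which $r$-sets $R$ to remove, which structured sets to add back) is never specified, and the claimed estimate ``projections saved $\gg$ new projections added'' is never verified. Since Theorem~\ref{th:brup} asks for an upper bound on $\trace(n,m,k)$, you must exhibit a concrete family $\FF$ and prove the bound for \emph{every} $I$ of size $n/2$; listing candidate pools (sunflowers, varieties) and saying ``with parameters tuned'' does not discharge this. The paper deduces this statement (and much more) from the sharper Theorem~\ref{t12}, whose upper bound is established in Subsection~\ref{subsec:UB-traces} by a simple direct construction: $\FF = \bigcup_{j=1}^{\ell} 2^{S_j}$ for $\ell$ independent uniformly random $x$-subsets $S_j$; one then computes the expected trace on a fixed $I$ and union-bounds over $I$. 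The original Bollob\'as--Radcliffe argument is of the same ``union of subcubes'' flavour. This is conceptually different from (and considerably simpler than) your ``perturb $\binom{[n]}{\le r}$'' approach.

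Beyond incompleteness, there is a concrete flaw in your accounting. You estimate that removing the subcollection $R$ of $r$-sets saves projections ``of order $|R|$''. This is false: deleting an $r$-set $e$ removes $e\cap I$ from the trace only if no other edge of $\FF$ projects to $e\cap I$. In $\FF_0=\binom{[n]}{\le r}$ with $|I|=n/2$, every $j$-subset $S\subseteq I$ with $j\le r$ is the common projection of all $\binom{n/2}{\le r-j}$ edges of the form $S\cup T$ with $T\subseteq [n]\setminus I$, $|T|\le r-j$, which is polynomially many sets. Hence the savings from removing $R$ can be far smaller than $|R|$ --- indeed zero unless $R$ is carefully coordinated with respect to an unknown $I$. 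This is not a bookkeeping issue one can defer; it is the crux of why the perturbative approach is hard to make work, and why both the paper and [BR] instead build a fresh family with an inherent tendency to collapse under projection (the random-subcube construction collapses automatically because, for a random $S_j$ with $|S_j| = x$, the conditional expectation of $|2^{S_j\cap I}|$ at $|I|=n/2$ is $(3/2)^x \ll 2^x$).

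A second issue: since $\sum_{i=0}^{r}\binom{n}{i}=\Theta(n^r)$, reducing the trace to $o(n^r)$ requires a factor of improvement that grows with $n$. Your plan removes and re-adds the same number of edges, so $|R|\le \binom{n}{r}$; even under the (incorrect) linear-savings assumption, you would need $|R| = (1-o(1))\binom{n}{r}$ simultaneously for every $I$, which your sketch does not come close to arranging. The union-of-random-subcubes construction sidesteps this entirely by not starting from the down-closed $r$-ball at all.
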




\subsection{Our results}
Our main result in this paper determines the value of $\trace(n,n^r,\a n)$, for constant $r$ and $\a$, up to logarithmic
factors, thus closing the gap between the lower and upper bounds in Theorems~\ref{th:brlb} and~\ref{th:brup}. 
We henceforth use the following standard notation: for two functions $f(n)$ and $g(n)$, by $f=\tilde{O}(g)$ we mean
$f = O(g\log^c(g))$ for some absolute constant $c>0$;
$f=\tilde{\Omega}(g)$ and
$f=\tilde{\Theta}(g)$ are defined analogously. 
The main result of this paper is as follows.
\begin{theo}[Main result]\label{t12}
	Let $r \ge 1$, $\a \in (0,1]$. If $r, \a^{-1} \le n^{o(1)}$ then 
	$\trace(n,\,n^r,\,\a n) = n^{\C(1-o(1))}$
	where
	\begin{equation}\label{eq:C}
	\C=\C(r,\a)=\frac{r+1-\log(1+\a)}{2-\log(1+\a)}.
	\end{equation}
	Moreover, if $r=O(1)$, $\a^{-1} \le (\log n)^{O(1)}$ then $\trace(n,\,n^r,\,\a n) = \tilde{\Theta}(n^{\C})$.
\end{theo}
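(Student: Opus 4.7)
The plan is to prove matching lower and upper bounds, both anchored in a ``sparse Kruskal--Katona'' (K--K) theorem---a shadow estimate for hypergraphs that avoid dense sub-hypergraphs. For the lower bound, given $\FF\sub 2^{[n]}$ with $|\FF|=n^r$, I would first reduce to working with a uniform $\FF\sub\binom{[n]}{s}$ at a single level $s$; this reduction uses a standard shifting/compression step together with a careful choice of $s$ tuned to the desired exponent $\C$. For any $I\sub[n]$ of size $\a n$ with $J=[n]\setminus I$, the trace $|\FF_I|$ counts the distinct projections $e\setminus J$ as $e$ ranges over $\FF$; level by level, these are exactly the subsets $T\sub I$ that can be completed inside $J$ to an edge of $\FF$, i.e., a ``$J$-directed'' variant of a classical shadow of $\FF$.

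\textbf{Lower bound via sparse K--K.} Classical K--K alone would yield only the weaker Bollob\'as--Radcliffe exponent $\lambda r$. The plan is to strengthen it: if $\FF$ contains no dense sub-hypergraph---concretely, no small vertex set $A$ that captures a disproportionately large fraction of the edges of $\FF$ inside $\binom{A}{s}$---then the lower shadow is substantially larger than what classical K--K guarantees. If on the other hand $\FF$ does contain such a dense sub-family, I would iterate on the sub-family; since each density-increment step strictly increases the density, after at most $O(\log n)$ iterations we either land in a family to which sparse K--K applies directly, or locate an $I$ of size $\a n$ with the required large trace already inside the sub-family. Optimizing over the level $s$ and the shadow depth against the constraint $|\FF|\ge n^{r-o(1)}$ produces the exponent $\C=(r+1-\log(1+\a))/(2-\log(1+\a))$.

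\textbf{Upper bound.} The matching upper bound requires an explicit $\FF$ with $|\FF|=n^r$ and $\trace(\FF,\a n)\le n^{\C(1+o(1))}$. I would take $\FF$ to be the tightness example of the sparse K--K theorem, translated into the trace language: an appropriately chosen $s$-uniform family on $[n]$ (for instance a pseudorandom one, or a carefully designed product of colex pieces) that avoids dense sub-hypergraphs while keeping all lower shadows small. A direct shadow computation then gives the desired trace bound; in the restricted regime $r=O(1)$ and $\a^{-1}\le (\log n)^{O(1)}$, each $n^{o(1)}$ loss sharpens to a poly-logarithmic factor, yielding $\tilde\Theta(n^{\C})$.

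\textbf{Main obstacle.} The crux is proving the sparse K--K theorem with the correct quantitative strength and exhibiting a matching tight construction. Both directions must be calibrated so that the specific exponent $\C$, rather than the classical K--K exponent, emerges from the optimization. In particular, one must settle on the right quantitative notion of a ``dense sub-hypergraph'' and design a density-increment scheme whose total loss is only $n^{o(1)}$---a delicate task, because the iteration has to respect the level parameter $s$ and the shadow depth simultaneously, and because the matching construction must avoid all such dense sub-hypergraphs while still realizing the tight shadow bound.
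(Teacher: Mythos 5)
You have correctly identified the two central ingredients---a sparse Kruskal--Katona theorem for the lower bound and a matching construction for the upper bound---and the reduction to a single uniform layer (which the paper performs by taking the most popular layer of the down-closure, losing only a $\log$ factor). However, the lower-bound argument as you sketch it has a gap precisely where the exponent $\C$ is supposed to appear. You propose a density-increment dichotomy iterated $O(\log n)$ times; this is both unnecessary and problematic: each increment step shrinks the vertex set (so the target size $\a n$ no longer matches), you give no mechanism guaranteeing the accumulated loss is $n^{o(1)}$, and ``optimizing over the level $s$ and the shadow depth'' does not by itself explain how $\C$ arises. What the paper does instead, and what your sketch misses, is a one-step bootstrap: write $\trace(\FF,\a n)=Bn$ and observe that $\wp(\FF,\a n)\le\trace(\FF,\a n)=Bn$, because any induced sub-hypergraph lies inside the trace on the same vertex set. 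Feeding the sparsity bound $Bn$ into the sparse Kruskal--Katona theorem and lower-bounding the \emph{expected} trace on a random $\a n$-set (Corollary~\ref{co:exptr}, which uses Lemma~\ref{lemma:113}) gives
\[
Bn \;\ge\; \frac{1}{C'}\cdot\frac{|\FF|}{B^{\,1-\log(1+\a)}},
\]
and solving this single self-referential inequality for $B$ immediately produces $\C=\frac{r+1-\log(1+\a)}{2-\log(1+\a)}$; no iteration is needed. Identifying this closing step is the key missing idea in your sketch.

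For the upper bound, ``a pseudorandom family, or a carefully designed product of colex pieces'' is not a construction and does not yield an exponent. The paper takes $\FF$ to be the down-closure of $\ell$ cliques on independent uniformly random $x$-subsets $S_1,\ldots,S_\ell$ of $[n]$ with $x=(\C-1)\log n$ and $\ell\approx n^r/2^{x-1}$; a hypergeometric-to-binomial comparison (Lemma~\ref{lemma:HG}) plus Chernoff and a union bound over all $\binom{n}{\a n}$ choices of $I$ gives $\trace(\FF,\a n)=O(n^\C)$ with positive probability, while the near edge-disjointness of the cliques (the $S_j$ pairwise intersect in fewer than $3r$ vertices with high probability) forces $|\FF|\ge n^r$. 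Colex-type families in particular cannot work: the whole point of the Bollob\'as--Radcliffe phenomenon, and of this theorem, is that the initial colex segment is far from extremal for traces at this scale, so a genuinely different, near-disjoint-cliques construction is required.
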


It is perhaps instructive to consider one representative special 
case: $r=2$, $\a = 1/2$. 
In this case, 
the proofs in~\cite{BR} 
bound $\trace(n,n^2,n/2)$ as follows;
$$
\Omega(n^{1.169925..}) =
\Omega(n^{2 \log_2 {3/2}}) \leq \trace(n,n^2,n/2) 
\le \frac{n^2 (\log \log n)^{O(1)}}{\log n} = o(n^2),
$$
whereas Theorem~\ref{t12} in particular implies that
$$\trace(n,n^2,n/2)=\tilde{\Theta}(n^{1+1/(3-\log_2 3)})
=\tilde{\Theta}(n^{1.706695..}).
$$

\paragraph{A new Kruskal-Katona-type theorem.}

As it turns out, our main result can be readily deduced from a new version of the well-known Kruskal-Katona Theorem.
Recall that the Kruskal-Katona Theorem gives a lower bound on the number of $i$-sets contained within the edges of a uniform hypergraph. Formally, for a hypergraph $\FF$ and $i \in \N$ we denote 
$$\binom{\FF}{i} = \big\{S \,\big\vert\, |S|=i \text{ and } \exists e \in \FF \colon S \sub e \big\}.$$
The following classical version of the Kruskal-Katona Theorem 
was given by Lov\'asz~\cite{Lovasz}.
Henceforth, for real $y > 0$ we use the standard notation 
$$\binom{y}{i}=\frac{y(y-1)\cdots(y-i+1)}{i!} .$$ 
We use the abbreviation that $\FF$ is a \emph{$k$-graph} 
to mean that $\FF$ is a $k$-uniform hypergraph.

\begin{theo}[Kruskal-Katona Theorem, Lov\'asz~\cite{Lovasz}]
	\label{th:kk}
	Let $\FF$ be a $k$-graph. 
	If $|\FF| = \binom{y}{k}$ with real $y > 0$
	then for every $0 \le i \le k$ we have $\big|\binom{\FF}{i}
\big| \ge \binom{y}{i}$.
\end{theo}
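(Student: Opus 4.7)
The plan is to reduce the multi-step statement to the single-step shadow inequality
$$|\partial \FF| \ge \binom{y}{k-1}, \quad \text{where } \partial \FF := \binom{\FF}{k-1},$$
and then iterate downward from $i = k-1$. Granted this inequality, define $y_{k-1}$ by $|\binom{\FF}{k-1}| = \binom{y_{k-1}}{k-1}$; then $y_{k-1} \ge y$, and applying the shadow inequality to the $(k-1)$-graph $\binom{\FF}{k-1}$ yields $|\binom{\FF}{k-2}| \ge \binom{y_{k-1}}{k-2} \ge \binom{y}{k-2}$, and so on down to $i = 0$. The iteration relies only on the monotonicity of $z \mapsto \binom{z}{j}$ on the relevant range.

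For the shadow inequality itself, I would use the standard compression operator: for $1 \le s < t$, let $S_{st}(\FF)$ replace each $e \in \FF$ with $s \notin e$, $t \in e$ by $(e \setminus \{t\}) \cup \{s\}$ whenever that set is not already in $\FF$. Routine calculations show $|S_{st}(\FF)| = |\FF|$ and $|\partial S_{st}(\FF)| \le |\partial \FF|$, so after finitely many shifts we may assume $\FF$ is \emph{left-compressed}, i.e., fixed by every $S_{st}$.

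Now I would induct on $k$. Partition $\FF = \FF_0 \sqcup \FF_1$ by whether vertex $1$ is absent or present in the edge, and set $\GG = \{e \setminus \{1\} : e \in \FF_1\}$, a $(k-1)$-graph on $\{2, \ldots, n\}$. Splitting $\partial \FF$ by whether the $(k-1)$-set contains vertex $1$, and using left-compressedness (which forces $\partial \FF_0 \sub \GG$), one obtains the clean identity $|\partial \FF| = |\partial \GG| + |\GG|$. Writing $|\GG| = \binom{z}{k-1}$ and $|\FF_0| = \binom{u}{k}$ with $z, u \ge 0$ real, the induction hypothesis applied to $\GG$ and to $\FF_0$ gives $|\partial \GG| \ge \binom{z}{k-2}$ and $|\partial \FF_0| \ge \binom{u}{k-1}$; the latter, together with $\partial \FF_0 \sub \GG$, forces $z \ge u$. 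Pascal's identity (valid for real arguments) then yields $|\partial \FF| \ge \binom{z+1}{k-1}$.

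The main obstacle is the concluding optimization: under the constraints $u \le z$ and $\binom{u}{k} + \binom{z}{k-1} = \binom{y}{k}$, one must show $\binom{z+1}{k-1} \ge \binom{y}{k-1}$, with equality at $u = z = y-1$. This is a real-variable inequality on binomial coefficients viewed as polynomials in the upper argument, and it is precisely the point where the real-valued formulation departs from the classical integer one and demands care; it can be handled by direct analysis of the function $z \mapsto \binom{z}{j}$, exploiting monotonicity and the Pascal-type identity. Once this step and the trivial base case $k = 1$ are in place, the iteration described in the first paragraph completes the proof for all $0 \le i \le k$.
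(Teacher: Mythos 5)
The paper does not actually prove this theorem: it is quoted from Lov\'asz's book (Problem 13.31) and used as a black box, so there is no ``paper proof'' to compare yours against. What you give is a compression (shifting) proof of the Lov\'asz form; this is a legitimate and classical route, though different from Lov\'asz's own argument in the cited reference (which, as I recall, is a short induction via a max-degree vertex and avoids shifting). Your reduction to the single shadow step is correct because $\binom{\FF}{i} = \binom{\binom{\FF}{j}}{i}$ for $i \le j$, and the identity $|\partial\FF| = |\partial\GG| + |\GG|$ for a left-compressed $\FF$ is right: the $(k-1)$-sets of $\partial\FF$ containing vertex $1$ are in bijection with $\partial\GG$, and those avoiding $1$ are exactly $\GG$ once one notes that left-compressedness forces $\partial\FF_0 \sub \GG$.

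Your flagged ``main obstacle'' also goes through, and here is the one-line argument: if $z < y-1$, then $u \le z < y-1$ together with monotonicity of $z \mapsto \binom{z}{k}$ and $z \mapsto \binom{z}{k-1}$ on $[\,k-1,\infty)$ gives $\binom{u}{k} + \binom{z}{k-1} < \binom{y-1}{k} + \binom{y-1}{k-1} = \binom{y}{k}$, a contradiction; hence $z+1 \ge y$ and $\binom{z+1}{k-1} \ge \binom{y}{k-1}$. Two small points to tighten: (i) you invoke the induction hypothesis for $\FF_0$, which is a $k$-graph with strictly fewer edges, so you should induct on $k + |\FF|$ (or lexicographically on $(k,|\FF|)$) rather than on $k$ alone; and (ii) you should say explicitly that $u$ and $z$ may be chosen $\ge k-1$ (since the respective edge counts are nonnegative integers) and that $|\FF|\ge 1$ forces $y \ge k$ (because $\big|\binom{y}{k}\big| < 1$ for $0<y<k$), so all the binomials involved lie in the monotone range. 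With those details supplied, your proof is correct.
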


Our new version of the Kruskal-Katona Theorem gives a stronger lower bound depending on the sparsity 
of the hypergraph $\FF$.
As is standard, we denote the sub-hypergraph of a hypergraph $\FF$ induced on a vertex subset $I$ by
$\FF[I] =(\,I, \,\,\{e\,\vert\, e \in \FF \text{ and } e\sub I\}\,)$.
We denote the largest number of edges in an induced sub-hypergraph on $i$ vertices by
$$\wp(\FF,i) := \max_{\substack{I \sub [n]\\|I|=i}} \big|\FF[I]\big|.$$
%
We next state our new version of the Kruskal-Katona Theorem. 
(See Theorem~\ref{th:skk} for a slightly stronger form.)
For the parameters relevant to our applications here, it provides
a significantly stronger estimate than the classical theorem,
using an appropriate sparseness assumption.

%
%


\begin{theo}[``Sparse Kruskal-Katona Theorem'']\label{th:skk-intro}
	Let $\FF$ be a $k$-graph with $n$ vertices and $|\FF|=n^r$ edges, $r \ge 1$. 
	If 
	$$\wp(\FF,\,\a n) \le \min\bigg\{\binom{x}{k-\ceil{r}}n, \,\, \frac{1}{2}|\FF|\bigg\}$$
	with real $x \ge 2k$ then for every $r+1 \le i \le k$ we have 
	$$\bigg|\binom{\FF}{i}\bigg| \ge \frac{1}{C}\cdot \frac{\binom{x}{i}}{\binom{x}{k}}|\FF|,$$
	with $C = (8k/\a)^{\ceil{5r}}\log n$.
\end{theo}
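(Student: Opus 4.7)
The plan is to reduce the shadow lower bound to a codegree upper bound, and then prove the latter through an iterated peeling of locally dense substructures. Using the standard double-counting identity
\[
|\FF|\binom{k}{i} = \sum_{S \in \binom{\FF}{i}} d_{\FF}(S), \qquad d_\FF(S) := |\{e \in \FF : S \sub e\}|,
\]
one has $\big|\binom{\FF}{i}\big| \ge |\FF|\binom{k}{i}/\max_S d_\FF(S)$, and since $\binom{x}{k}\binom{k}{i} = \binom{x}{i}\binom{x-i}{k-i}$, the target inequality is equivalent to showing $\max_S d_\FF(S) \le C\binom{x-i}{k-i}$ for every $i$-subset $S \sub [n]$.

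To bound the codegree of a fixed $i$-set $S$, I would study the link $\FF_S := \{e \setminus S : S \sub e \in \FF\}$, which is a $(k-i)$-graph with $|\FF_S| = d_\FF(S)$. The key observation is that a large $|\FF_S|$ forces many edges of $\FF$ to concentrate in a relatively small vertex subset. If the vertex support of $\FF_S$ fits inside $\a n - i$ vertices, then the sparseness hypothesis $\wp(\FF,\a n) \le \binom{x}{k-\ceil{r}}n$ directly caps $d_\FF(S)$, and a short computation using Theorem~\ref{th:kk} on $\FF_S$ yields the desired bound $C\binom{x-i}{k-i}$. Otherwise some vertex $v$ has very high degree in $\FF_S$, and one peels to $\FF_{S \cup \{v\}}$, iterating $\FF \supset \FF_S \supset \FF_{S \cup \{v_1\}} \supset \cdots$ until the residual vertex support shrinks enough. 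The process terminates after at most $\ceil{r}$ rounds since each step reduces the uniformity $k-j$ by one toward the threshold $k-\ceil{r}$ at which the hypothesis becomes directly applicable.

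To control the constant $C$, I would dyadically partition $\FF$ into $O(\log n)$ codegree buckets (edges whose $i$-subsets have codegree in some dyadic interval $[2^t, 2^{t+1})$) and handle each bucket separately; this produces the $\log n$ factor in $C$. Within each bucket, the sparseness hypothesis applies to the restricted sub-hypergraph with parameters updated by constant factors, and each of the $O(r)$ peeling rounds multiplies the accumulated loss by $(k/\a)^{O(1)}$, compounding to $(8k/\a)^{\ceil{5r}}$.

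The main obstacle I expect is propagating the sparseness invariant through the peeling: when passing from $\FF$ to a link $\FF_S$ or to a codegree-bucket sub-hypergraph, one must verify that a variant of $\wp(\cdot,\a \cdot) \le \binom{x'}{k'-\ceil{r'}} n'$ continues to hold with suitably updated parameters $(x',k',r',n')$. A secondary subtlety is choosing the peeling stopping rule carefully so the recursion terminates in $O(r)$ rather than $O(k)$ rounds --- this is presumably the reason the sparseness hypothesis features the specific quantity $k - \ceil{r}$ rather than $k - i$.
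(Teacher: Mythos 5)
Your reduction in the first paragraph is where the argument breaks. The inequality
$\big|\binom{\FF}{i}\big| \ge |\FF|\binom{k}{i}/\max_S d_\FF(S)$
is correct, but going from there to "the target inequality is equivalent to showing $\max_S d_\FF(S) \le C\binom{x-i}{k-i}$" only gives a \emph{sufficient} condition, and it is one that the hypotheses of the theorem simply do not imply. Consider the star construction: take $r=1$, $k=3$, $i=2$, fix two vertices $v_0,v_1$, and let $\FF$ consist of the $n$ edges $\{v_0,v_1,w\}$ as $w$ ranges over the remaining vertices. Then $\wp(\FF,\a n)\approx\a^2 n$, so the sparseness hypothesis $\wp(\FF,\a n)\le\min\{\binom{x}{k-\ceil{r}}n,\tfrac12|\FF|\}$ is satisfied with $x=O(1)$ (e.g.\ $x=6$), yet $d_\FF(\{v_0,v_1\})=n$ whereas $C\binom{x-i}{k-i}=C(x-2)=O(C)$. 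No constant $C$ of the promised magnitude $(8k/\a)^{\ceil{5r}}\log n$ repairs this. The conclusion of the theorem nevertheless holds for this $\FF$ (direct count gives $\big|\binom{\FF}{2}\big|=2n-3$), because the one bad $i$-set contributes negligibly; the information is in the \emph{typical} codegree, not the maximum. Your secondary ideas (peeling, dyadic buckets) gesture at fixing this, but as written the "easy case" already fails quantitatively ($\binom{x}{k-\ceil{r}}n$ is far larger than $C\binom{x-i}{k-i}$), and the peeling has no principled termination argument --- peeling increases $i$, which shrinks the target $\binom{x-i}{k-i}$, so the claim only gets harder, and there is no reason the link support collapses to $\a n$ vertices within $\ceil{r}$ rounds.

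The paper avoids all of this by never attempting a uniform codegree bound. Instead it runs an \emph{averaged} decomposition: Lemma~\ref{lelinkcor} (an iterated version of Lemma~\ref{lelink}) produces at least $(\a n)^s$ vertex $s$-tuples $U$ whose links $\FF(U)$ are all large, each such link is regularized via Lemma~\ref{l31} to have a clean minimum-degree guarantee, and Kruskal--Katona is applied to each regularized link separately, with shadows then summed up and divided by an overcounting factor $i^t$ (Lemma~\ref{lemma:neat}). The heart of the proof is then showing that \emph{most} of these tuples $U$ satisfy a good size bound $\binom{x_U}{k-(|U|+1)}\le\binom{x}{k-t}$ --- precisely step~(\ref{eq:goal}) --- and this is where the sparseness hypothesis is used in a genuinely global way: if too many links had a large size parameter, then unioning a carefully chosen $O(\a n/n_0)$ of them would pack too many edges into an $\a n$-vertex induced subhypergraph, contradicting $\wp(\FF,\a n)\le\binom{x}{k-t}n$. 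Your peeling intuition does echo Lemma~\ref{lelinkcor}, but the crucial difference is that the paper collects \emph{many} good links rather than trying to show all links are good, and the argument is organized around the shadows of regularized links rather than around codegrees of $i$-sets.
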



It is of course natural to ask whether the bound in Theorem~\ref{th:skk-intro} is essentially best possible.
Our third result in this paper proves that this is indeed the case.

\begin{theo}[Upper bound for Sparse Kruskal-Katona]\label{theo:sKK-UB-intro}
	Let $n,k,x \in \N^+$, $r \ge 1$ and $0 < \a \le 1$ with 
	$3r \le k \le x \le n^{1/6}$
	and $n \le \a^k n^r \le \binom{x}{k}n$.
	There exists a $k$-graph $\FF$ with $n$ vertices, $|\FF|=n^r$ edges, 
	and 
	$\wp(\FF,\,\a n) \le O(\binom{x}{k}n)$
	such that for every $0 \le i \le k$ we have 
	$\big|\binom{\FF}{i}\big| \le \frac{\binom{x}{i}}{\binom{x}{k}}|\FF|$.
\end{theo}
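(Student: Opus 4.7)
My plan is to take $\FF$ as a union of $t := \lceil n^r/\binom{x}{k}\rceil$ ``cliques'' $\binom{K_j}{k}$, where $K_1,\ldots,K_t\sub[n]$ are independent uniformly random $x$-element subsets. The guiding intuition is that a single $x$-clique saturates the target shadow ratio, since $\big|\binom{\binom{K}{k}}{i}\big|=\binom{x}{i}=\frac{\binom{x}{i}}{\binom{x}{k}}\big|\binom{K}{k}\big|$ for any $x$-set $K$, and the same identity holds for any union of cliques whose $k$-subsets are pairwise disjoint. Under $x \le n^{1/6}$ and $k \ge 3r$, two independent random $x$-sets share $k$ elements with probability at most $\binom{x}{k}^2/\binom{n}{k}$, so the expected number of ``bad'' pairs of cliques is at most $t^2\binom{x}{k}^2/\binom{n}{k}=n^{2r}/\binom{n}{k}=o(t)$, and pruning the few offending cliques yields a family whose $k$-subsets are pairwise disjoint. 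This produces the deterministic shadow bound $\big|\binom{\FF}{i}\big|\le t\binom{x}{i}=\frac{\binom{x}{i}}{\binom{x}{k}}|\FF|$, with a minor adjustment handling integer rounding so that $|\FF|=n^r$ exactly.

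With the shadow bound essentially automatic, the core of the theorem is the sparsity condition $\wp(\FF,\a n)=O(\binom{x}{k}n)$. Fix $I\sub[n]$ with $|I|=\a n$, and let $X_I:=\sum_{j=1}^t\binom{|K_j\cap I|}{k}$, which upper-bounds $|\FF[I]|$. The summands are independent random variables in $[0,\binom{x}{k}]$ with $\Ex[\binom{|K_j\cap I|}{k}]=\binom{x}{k}\binom{\a n}{k}/\binom{n}{k}\le\a^k\binom{x}{k}$, so $\Ex[X_I]\le t\a^k\binom{x}{k}=\a^k n^r\le\binom{x}{k}n$ by the hypothesis $\a^k n^r\le\binom{x}{k}n$. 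Normalizing $Z_j:=\binom{|K_j\cap I|}{k}/\binom{x}{k}\in[0,1]$, the standard Chernoff tail yields
\[
\Pr\big[X_I\ge C\binom{x}{k}n\big]=\Pr\bigg[\sum_{j=1}^t Z_j\ge Cn\bigg]\le e^{-Cn(\ln C-1)},
\]
which beats $2^{-n}$ once $C$ is a large enough absolute constant (e.g.\ $C\ge 4$). Union-bounding over the $\binom{n}{\a n}\le 2^n$ choices of $I$ shows that with positive probability every $\a n$-subset of $[n]$ contains at most $O(\binom{x}{k}n)$ edges of $\FF$, as required.

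The main obstacle is precisely this balance: $\Ex[X_I]/\binom{x}{k}$ can be as large as $n$, matching the logarithm of the number of subsets $I$ to be union-bounded, so only the exponential Chernoff rate---guaranteed by the condition $C(\ln C-1)>\ln 2$---is strong enough to survive the union bound. Everything else, such as the pairwise-overlap analysis needed for the shadow inequality to hold with the tight constant $1$ rather than $1+o(1)$, and the integer-rounding step required to make $|\FF|=n^r$ on the nose, is technical but routine under the parameter regime $3r\le k\le x\le n^{1/6}$.
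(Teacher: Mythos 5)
Your proposal is correct and follows essentially the same route as the paper's proof: take a union of about $n^r/\binom{x}{k}$ independent random $x$-cliques, arrange for the cliques to be (nearly) edge-disjoint so that the shadow ratio is exactly $\binom{x}{i}/\binom{x}{k}$, and control $\wp(\FF,\a n)$ by Chernoff plus a union bound over all $\binom{n}{\a n}$ choices of $I$. The only departures are cosmetic: you enforce disjointness via Markov's inequality on the expected number of bad pairs followed by pruning, whereas the paper union-bounds the event that some pair $S_j, S_{j'}$ meets in at least $3r$ elements, and you compute $\Ex\big[\binom{|K_j\cap I|}{k}\big]=\binom{\a n}{k}\binom{x}{k}/\binom{n}{k}\le \a^k\binom{x}{k}$ directly, avoiding the paper's hypergeometric-to-binomial comparison lemma. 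One point to make explicit in a full write-up: after pruning, the number of surviving cliques times $\binom{x}{k}$ must still be at least $n^r$, so you should start from a slightly inflated $t$ (say $2\lceil n^r/\binom{x}{k}\rceil$, which the hypothesis $\a^k n^r\le\binom{x}{k}n$ still keeps within the expectation budget) before trimming down to exactly $n^r$ edges.
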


\paragraph{Applications.}
We end the paper with two simple 
applications of our main result, in geometry and in graph theory. 
We first describe the geometric application.
Let $\HH$ be a family of halfspaces\footnote{A halfspace consist of all points above a hyperplane.}  in $\R^d$, and let $P$ be a set of points in $\R^d$. We say that $P$ \emph{separates} $\HH$ if for every pair of distinct halfspaces $H_1 \neq H_2 \in \HH$ there is a point in $P$
that lies in one and outside the other.

\begin{prop}\label{prop:application-intro}
	Let $P \subset \R^d$ be a set of $n$ points and let $\HH$ be a family of $n^r$ halfspaces in $\R^d$, with $1 \le r \le n^\d$, such that $P$ separates $\HH$.  
	Then there exists a subset $P' \subseteq P$ of at most $n^{1-\d}$ points and
	a subset $\HH' \subseteq \HH$ of at least $n^{\frac{r+1}{2}(1-O(\d))}$ halfspaces such that $P'$ separates $\HH'$.
%
\end{prop}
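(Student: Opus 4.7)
The plan is to reduce Proposition~\ref{prop:application-intro} to a direct application of Theorem~\ref{t12} via the standard correspondence between halfspaces and hypergraph traces. Identifying $P$ with $[n]$, associate to each halfspace $H \in \HH$ the set $e_H := H \cap P \sub P$, and let $\FF := \{e_H : H \in \HH\} \sub 2^{[n]}$. Since $P$ separates $\HH$, the sets $e_H$ are pairwise distinct, so $|\FF| = |\HH| = n^r$. Note that a subset $P' \sub P$ separates a subfamily $\HH' \sub \HH$ if and only if the map $H \mapsto e_H \cap P'$ is injective on $\HH'$. So it suffices to find $P' \sub P$ of size at most $n^{1-\d}$ with $|\FF_{P'}| \ge n^{\frac{r+1}{2}(1-O(\d))}$; then picking one representative halfspace from each fiber of the projection yields an $\HH'$ of the desired size that is separated by $P'$.

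Next, apply Theorem~\ref{t12} with $k = \a n := n^{1-\d}$, equivalently $\a = n^{-\d}$. The hypotheses $r, \a^{-1} \le n^{o(1)}$ are satisfied since $r \le n^\d$ and $\a^{-1} = n^\d$ (with $\d = o(1)$, the regime in which the proposition's conclusion is non-trivial). The theorem produces a set $P' \sub P$ with $|P'| = n^{1-\d}$ and
$$|\FF_{P'}| \ge n^{\C(1-o(1))}, \qquad \C = \frac{r+1-\log(1+n^{-\d})}{2-\log(1+n^{-\d})}.$$

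It remains to lower-bound $\C$. Writing $\epsilon := \log(1+n^{-\d})$, a direct differentiation gives $\partial\C/\partial\epsilon = (r-1)/(2-\epsilon)^2 \ge 0$ for $r \ge 1$, so $\C$ is minimized at $\epsilon = 0$, giving $\C \ge (r+1)/2$. Combined with the trace bound, $|\FF_{P'}| \ge n^{(r+1)/2 \cdot (1-o(1))}$, which has the required form $n^{\frac{r+1}{2}(1-O(\d))}$.

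The only step requiring some care is showing that the $o(1)$ loss furnished by Theorem~\ref{t12} is dominated by the $O(\d)$ factor in the exponent claimed by the proposition. Unpacking the quantitative dependence of the error term on $r$ and $\a^{-1}$ (the latter equal to $n^\d$), one checks that it scales polynomially in $\log\a^{-1} = \d\log n$ divided by $\log n$, i.e., like $O(\d)$, exactly as needed. This is essentially bookkeeping, but is the one point where the reduction is not entirely automatic.
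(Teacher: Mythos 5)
Your proof is correct and follows essentially the same route as the paper's, the only difference being that the paper invokes the precise form Theorem~\ref{th:main} (which gives $\trace(n,n^r,\a n) \ge n^{\C(1-O(\d))}$ directly under $r,\a^{-1}\le n^\d$) rather than the summary statement Theorem~\ref{t12}, thereby eliminating the one step you flag as needing care. Your monotonicity argument showing $\C \ge (r+1)/2$ is a valid and slightly more explicit spelling-out of an inequality the paper uses without comment.
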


The graph-theoretic application shows that in any graph with not too many independent sets, one can always find an induced subgraph on a vanishingly small number of vertices that nevertheless retains significantly more than square root of the total number of independent sets.

\begin{prop}
	Let $G = (V,E)$ be an $n$-vertex graph, and assume that the number of independent sets in $G$ is $n^r$ with $1 \le r \le n^\d$.
	Then there exists a subset $V' \subseteq V$ of at most $n^{1-\d}$ vertices such that the number of independent sets in the induced subgraph $G[V']$ is at least $n^{\frac{r+1}{2}(1-O(\d))}$.
\end{prop}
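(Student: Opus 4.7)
The plan is to deduce the statement directly from Theorem~\ref{t12} by interpreting the collection of independent sets of $G$ as the set system in a trace problem. Let $\FF \sub 2^V$ denote the family of all independent sets of $G$, so $|\FF| = n^r$ by hypothesis. The key observation, which does all the combinatorial work, is that $\FF$ is \emph{downward-closed}: every subset of an independent set is independent. Consequently, for any $I \sub V$,
\[\FF_I \;=\; \{e \cap I : e \in \FF\} \;=\; \{S \sub I : S \in \FF\},\]
and the right-hand side is exactly the collection of independent sets of the induced subgraph $G[I]$. So the number of independent sets in $G[I]$ is precisely $|\FF_I|$.

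Next, I would set $\a := n^{-\d}$, so that the target size $n^{1-\d}$ matches $\a n$. The hypothesis $r \le n^\d$ gives $r, \a^{-1} \le n^\d$, and since $\d = o(1)$ (otherwise the claim is vacuous) the precondition of Theorem~\ref{t12} is satisfied. Applying the theorem to $\FF$ produces a subset $V' \sub V$ with $|V'| = \a n = n^{1-\d}$ for which
\[|\FF_{V'}| \;\ge\; \trace(n, n^r, \a n) \;=\; n^{\C(1-o(1))}, \qquad \C = \frac{r+1-\log(1+\a)}{2-\log(1+\a)}.\]
Setting $V'$ as this subset immediately gives the required induced subgraph via the observation above.

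It remains to verify that this exponent meets the claimed bound $\frac{r+1}{2}(1-O(\d))$. For $r \ge 1$ the rational function $c \mapsto (r+1-c)/(2-c)$ is non-decreasing on $[0,1]$ and takes the value $(r+1)/2$ at $c=0$, so $\C \ge (r+1)/2$; combined with $\log(1+\a) = O(\a) = O(n^{-\d})$, a routine expansion yields $\C = \frac{r+1}{2}(1+O(n^{-\d}))$. Substituting back gives $|\FF_{V'}| \ge n^{\frac{r+1}{2}(1-o(1))}$, and the residual $o(1)$ is absorbed into the $O(\d)$ factor in the statement. The combinatorial heart of the argument is the single observation that the trace of a downward-closed family on $I$ coincides with the restriction of the family to $I$; the only point requiring genuine care is the asymptotic book-keeping of error terms to land precisely on $\frac{r+1}{2}(1-O(\d))$, which I expect to be the main—though relatively minor—obstacle.
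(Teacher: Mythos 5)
Your proposal is correct and follows essentially the same route as the paper's proof: define $\FF$ as the family of independent sets, use downward-closure to identify $\FF_{V'}$ with the independent sets of $G[V']$, and apply the traces lower bound with $\a=n^{-\d}$ together with the monotonicity observation $\C \ge (r+1)/2$. The one small bookkeeping point is that to get the error term in the form $O(\d)$ rather than an unspecified $o(1)$ you should cite the quantitative lower-bound form (Theorem~\ref{th:main}), whose hypothesis $r,\a^{-1} \le n^\d$ is exactly what you have, rather than the qualitative statement of Theorem~\ref{t12}.
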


\paragraph{Organization.}
In Section~\ref{sec:sKK-LB} we prove Theorem~\ref{th:skk-intro} using 
an appropriate 
hypergraph decomposition method. 
We use it in Subsection~\ref{sec:traces} to deduce the lower bound in Theorem~\ref{t12}. 
In Subsection~\ref{sec:sKK-UB} we prove that the parameters of Theorem~\ref{th:skk-intro} are essentially best possible, and  in Subsection~\ref{subsec:UB-traces} we prove a matching upper 
bound for Theorem~\ref{t12}, 
using a probabilistic construction.
Our applications, Proposition~\ref{pr:tr} and Proposition~\ref{prop:app-graphs}, are given in Section~\ref{sec:applications}.


\paragraph{Proofs overview.}
For the proof of the sparse Kruskal-Katona Theorem (Theorem~\ref{th:skk-intro}, see also Theorem~\ref{th:skk} below) we proceed as follows. In the first part of the proof we apply a new approximate hypergraph decomposition method, relying on the sparseness of the input hypergraph, into  links. The decomposition is performed iteratively, in each step finding many vertices of high degree within the current link and restricting the next links to them. The final step of these iterations consists of ``cleaning'' each link by iteratively removing vertices of low degree. We then prove, using the sparseness of the hypergraph, that in fact most of the parts in our decomposition have few edges.  
In the second part of the proof we find, by applying the classical Kruskal-Katona Theorem, $i$-subsets within the edges of each (sub)link separately. We then argue that, since the links approximately decompose the hypergraph, we may essentially collect the $i$-subsets from all links without much overcounting.
The proof of our lower bound for traces (in Theorem~\ref{t12}) follows quite easily from the sparse Kruskal-Katona Theorem by applying it on the most ``popular layer'' of the hypergraph (i.e., the uniform hypergraph with the most edges contained in our hypergraph, which we may assume is down-closed) and projecting onto a random subset of $\a n$ vertices.

For the proof that the parameters in the sparse Kruskal-Katona Theorem are essentially best possible (Theorem~\ref{theo:sKK-UB} below)
we give a randomized construction of a uniform hypergraph whose edges contain few $i$-subsets. The construction is fairly simple: the union of a carefully chosen number of cliques on random subsets, such that it simultaneously holds that there are many cliques and yet they are nearly edge disjoint. We show in particular that the expected number of edges induced on subsets of $\a n$ vertices is sufficiently small so as to allow taking a union bound over all cliques. 
The proof of the upper bound for traces (in Theorem~\ref{t12}) follows by taking the down-closed hypergraph generated by the uniform hypergraph above, and then upper bounding the expected trace on a random subset of $\a n$ vertices.
\vspace{0.2cm}



\noindent
Throughout the paper we 
assume, whenever needed, that $n$ is sufficiently large. 
All logarithms are in base $2$ unless otherwise specified. 
To simplify the presentation we omit all floor and ceiling signs whenever these are not crucial. 


\section{Sparse Kruskal-Katona and Traces Lower Bound}\label{sec:sKK-LB}

In this section we prove Theorem~\ref{th:skk-intro}.
Henceforth, for a hypergraph $\FF$ on $V$ and for a vertex subset $I \sub V$ we denote by $\FF(I)$ the \emph{link} of $I$ in $\FF$, that is, 
$$\FF(I)=(\,V \setminus I,\,\,\{e \setminus I \,\vert\, I \sub e \in \FF \}\,).$$
Note that if $\FF$ is a $k$-graph then $\FF(I)$ is
a $(k-|I|)$-graph.
For a tuple $U$ of vertices in $V$ we denote by $|U|$ the number of 
distinct vertices in $U$, and by $\FF(U)$ the link $\FF(I)$ where 
$I$ is the set of (distinct) vertices in $U$ (and so $\FF(U)$ is a $(k-|U|)$-graph).

For the proof we will need several lemmas which we state and prove below.
We begin with the following simple ``hypergraph regularization'' lemma.
\begin{lemma}
	\label{l31}
	Every hypergraph $\FF=(V,E)$ has an induced sub-hypergraph $\FF'=(V',E')$ satisfying:
	\begin{enumerate}
		\item\label{item:reg-1}
		$|E'|/|V'| \geq |E|/|V|.$
		\item\label{item:reg-2}
		The degree of each vertex $v \in V'$ in $\FF'$ is at least
		$\frac{|E'|}{2|V'| \log|V|}$.
		\item\label{item:reg-3}
		$|E'| > |E|/2.$
	\end{enumerate}
\end{lemma}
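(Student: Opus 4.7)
The plan is to construct $\FF'$ by iteratively peeling off low-degree vertices. Set $\FF^{(0)} := \FF$, and while there exists a vertex $v \in V^{(t)}$ whose degree in $\FF^{(t)}$ is strictly less than $|E^{(t)}|/(2|V^{(t)}|\log|V|)$, remove $v$ (together with all edges containing it) to form $\FF^{(t+1)}$. Since $|V^{(t)}|$ strictly decreases at each step, the process terminates; take $\FF' = (V', E')$ to be the resulting hypergraph.

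Item~\ref{item:reg-2} is then immediate from the stopping condition: when the process halts, every $v \in V'$ has degree at least $|E'|/(2|V'|\log|V|)$. For item~\ref{item:reg-1}, observe that the removal threshold $|E^{(t)}|/(2|V^{(t)}|\log|V|)$ is at most the current density $d^{(t)} := |E^{(t)}|/|V^{(t)}|$, so each removed vertex has degree strictly below $d^{(t)}$ and removing it strictly increases the density; hence $d^{(t)}$ is monotonically non-decreasing and $|E'|/|V'| \geq |E|/|V|$.

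The one quantitative estimate is item~\ref{item:reg-3}. At each step we lose fewer than $|E^{(t)}|/(2|V^{(t)}|\log|V|)$ edges, so $|E^{(t+1)}| > |E^{(t)}|\bigl(1 - 1/(2|V^{(t)}|\log|V|)\bigr)$. Iterating and applying the elementary bound $\prod_i(1-x_i) \geq 1 - \sum_i x_i$ (valid for $x_i \in [0,1]$) yields
\[
|E^{(t)}| > |E|\Bigl(1 - \frac{1}{2\log|V|}\sum_{s=0}^{t-1}\frac{1}{|V^{(s)}|}\Bigr).
\]
Since $|V^{(s)}| \geq |V| - s$, the inner sum is bounded by the harmonic number $H_{|V|}$, which is at most $\log|V|$ for $|V|$ sufficiently large; hence $|E^{(t)}| > |E|/2$ throughout the process, establishing item~\ref{item:reg-3}. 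The one point worth noting is that the more familiar ``maximum-density induced subhypergraph'' argument yields items~\ref{item:reg-1} and~\ref{item:reg-2} easily (indeed with a stronger, $\log$-free bound in~\ref{item:reg-2}), but can violate item~\ref{item:reg-3}: a small extremely dense subhypergraph may dominate the maximum density while containing far fewer than half the edges. The adaptive peeling above sidesteps this by removing only vertices whose degree is a factor of $2\log|V|$ below the current density, forcing the total edge loss to remain logarithmically small.
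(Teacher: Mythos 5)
Your proof is correct and takes essentially the same approach as the paper: iteratively peel a vertex whose degree falls below the threshold $|E^{(t)}|/(2|V^{(t)}|\log|V|)$, observe that this only increases the density (giving items~\ref{item:reg-1} and~\ref{item:reg-2}), and bound the cumulative edge loss via $\prod(1-x_i)\ge 1-\sum x_i$ followed by a harmonic-sum estimate to get item~\ref{item:reg-3}. The paper phrases the removal rule as ``delete a minimum-degree vertex while condition~\ref{item:reg-2} fails'' and bounds the harmonic sum directly by $\ln n$ rather than $H_{|V|}\le\log|V|$, but these are cosmetic differences.
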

\begin{proof} 
	Put $n=|V|$, $V_0=V$ and $E_0=E$. Starting with $i=0$, as long as the
	hypergraph $\FF_i=(V_i,E_i)$ in which $|V_i|=n-i$ does not satisfy~(\ref{item:reg-2}), let $V_{i+1}$ be the set obtained from $V_i$ by
	removing a vertex of minimum degree in $\FF_i$, and let $\FF_{i+1}$ be
	the induced subhypergraph on this set. 
	It is easy to see that $|E_{i+1}|/|V_{i+1}| > |E_i|/|V_i|$ and hence this process must terminate with a nonempty hypergraph $\FF_j=(V_j,E_j)$. Define $V'=V_j$, $E'=E_j$. Then~(\ref{item:reg-1}) holds as the quantity $|E_i|/|V_i|$ keeps increasing during the process, 
	(\ref{item:reg-2}) holds by the definition of $j$, and (\ref{item:reg-3}) holds since 
	\begin{align*}
	|E'| &= |E_j| \ge |E_0| \prod_{i=0}^{j-1} \Big(1-\frac{1}{2(n-i) \log n}\Big) \\
	&\geq |E_0| \Big(1-\sum_{i=0}^{j-1} \frac{1}{2(n-i) \log n}\Big)
	\geq |E|\Big(1-\frac{\ln n}{2\log n}\Big ) > \frac{|E|}{2}.
	\end{align*}
\end{proof}

\begin {lemma}
\label{lelink}
Let $\FF$ be a hypergraph on $n$ vertices with $\wp(\FF,i) \le \frac{|\FF|}{2}$, for some integer $0<i<n$.
Then $F$ has at least $i$ vertices of degree at least $\frac{|\FF|}{2n}$. 
%
%
%
\end {lemma}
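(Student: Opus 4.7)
The plan is to prove Lemma~\ref{lelink} by a direct counting argument, contradicting the sparseness assumption $\wp(\FF,i) \le |\FF|/2$. Suppose for contradiction that fewer than $i$ vertices have degree at least $|\FF|/(2n)$; call this set of ``heavy'' vertices $H$, so $|H| \le i-1$, and let $L = V \setminus H$ consist of the ``light'' vertices, each of degree strictly less than $|\FF|/(2n)$.

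The key step is to bound the number of edges that meet $L$. By a simple union bound over the vertices of $L$, the number of edges containing at least one vertex of $L$ is at most $\sum_{v \in L} \deg(v) < |L| \cdot |\FF|/(2n) \le |\FF|/2$. Therefore strictly more than $|\FF|/2$ edges of $\FF$ are entirely contained in $H$.

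Since $|H| \le i-1 < i < n$, we may extend $H$ to a vertex set $I$ of size exactly $i$; the induced sub-hypergraph $\FF[I]$ still contains all of the edges of $\FF$ that lie inside $H$, hence $|\FF[I]| > |\FF|/2$. This contradicts the hypothesis $\wp(\FF,i) \le |\FF|/2$, completing the proof.

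I do not expect any step here to be a genuine obstacle: the argument is essentially a pigeonhole plus a trivial union bound. The only small care needed is to ensure we can indeed extend $H$ to a set of $i$ vertices (which uses $i < n$) and to note that since the bound on edges touching $L$ is strict, the remaining count strictly exceeds $|\FF|/2$, producing the required contradiction.
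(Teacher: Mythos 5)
Your proof is correct and takes essentially the same approach as the paper: both count the edges incident to the light vertices (there are at most $n$ of them, each of degree strictly below $|\FF|/(2n)$) to conclude that more than $|\FF|/2$ edges lie entirely inside the heavy set, and then invoke $\wp(\FF,i)\le|\FF|/2$. The paper states this directly (concluding the heavy set has more than $i$ vertices) whereas you phrase it as a contradiction, but the underlying argument is identical.
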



\begin{proof}
	Let $I \sub V(F)$ denote the set of vertices of $\FF$ of degree at least $\frac{|\FF|}{2n}$.
	Then
	$|F[I]| > |\FF|-n \cdot \frac{|\FF|}{2n} =
	\frac{|\FF|}{2}.$
	By the assumption on $\wp(\FF,i)$ we thus have $|I| > i$.
	%
	%
\end{proof}

By an iterative application of Lemma~\ref{lelink} we obtain the following. 
\begin {lemma}
\label{lelinkcor}
Let $s \in \N^+$ and let $\FF$ be a hypergraph on $V$ with 
$\wp(\FF,i) \le \frac{|\FF|}{2^{s} |V|^{s-1}}$. 
Then there are at least $i^s$ $s$-tuples $U \in V^s$ with $|\FF(U)| \ge \frac{|\FF|}{(2|V|)^s}$.
\end{lemma}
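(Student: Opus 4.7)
The plan is to prove Lemma~\ref{lelinkcor} by induction on $s$, making concrete the hint to ``iteratively apply Lemma~\ref{lelink}.'' The base case $s=1$ reduces to Lemma~\ref{lelink}: the hypothesis becomes $\wp(\FF,i)\le|\FF|/2$, and the $\ge i$ vertices of degree at least $|\FF|/(2|V|)$ supplied by that lemma are exactly the required $1$-tuples.

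For the inductive step ($s\ge 2$), I would first apply Lemma~\ref{lelink} to $\FF$---the hypothesis $\wp(\FF,i)\le|\FF|/(2^s|V|^{s-1})\le|\FF|/2$ ensures this is legitimate---to obtain a set $I\sub V$ of $\ge i$ vertices $v$ with $|\FF(v)|=\deg_\FF(v)\ge|\FF|/(2|V|)$. For each such $v$, the link $\FF(v)$ is a hypergraph on $|V|-1\le|V|$ vertices and with at least $|\FF|/(2|V|)$ edges. Invoking the inductive hypothesis with parameter $s-1$ on $\FF(v)$ would produce $\ge i^{s-1}$ $(s-1)$-tuples $U'$ satisfying
$$|\FF(v)(U')|\ \ge\ \frac{|\FF(v)|}{(2|V(\FF(v))|)^{s-1}}\ \ge\ \frac{|\FF(v)|}{(2|V|)^{s-1}}\ \ge\ \frac{|\FF|}{(2|V|)^{s}}.$$
Prepending $v$ to each such $U'$ gives an $s$-tuple $U$ with $|\FF(U)|=|\FF(v)(U')|$ (since $U'$ is a tuple on $V(\FF(v))=V\setminus\{v\}$, so $v$ does not appear in $U'$), and the $i\cdot i^{s-1}=i^s$ resulting tuples are pairwise distinct.

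The main obstacle is verifying the sparsity hypothesis for the inductive call, namely $\wp(\FF(v),i)\le|\FF(v)|/(2^{s-1}|V(\FF(v))|^{s-2})$. From the definitions one gets only $\wp(\FF(v),i)\le\wp(\FF,i+1)$, and $\wp(\FF,i+1)$ can exceed $\wp(\FF,i)$ in general, so the hypothesis does not transfer to the link for free. I expect to handle this by exploiting the substantial slack in the stated hypothesis---it is a factor of $|V|^{s-1}$ stronger than what a single step of Lemma~\ref{lelink} requires---and by phrasing all sparsity bounds throughout the recursion against the original ambient vertex count $|V|$ rather than the shrinking $|V(\FF_j)|$ at iteration step $j$. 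Each iteration then loses at most a factor of $2|V|$ simultaneously in the edge count and in the sparsity threshold, which is exactly what the telescoping between $\wp(\FF,i)\le|\FF|/(2^s|V|^{s-1})$ and $|\FF(U)|\ge|\FF|/(2|V|)^s$ can absorb.
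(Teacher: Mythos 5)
Your induction base case is correct, and your identification of the obstacle in the inductive step is exactly right; however, the fix you sketch does not actually close the gap, and what the paper does is different in a way that matters. The issue is not one of normalizing against $|V(\FF_j)|$ versus the original $|V|$, and it is not absorbed by telescoping $2|V|$ factors. The genuine difficulty is that for a vertex subset $I \subseteq V\setminus\{v\}$ with $|I|=i$, the edges of $\FF(v)[I]$ biject with edges of $\FF$ contained in $I\cup\{v\}$ and containing $v$, so the only free bound is $\wp(\FF(v),i)\le\wp(\FF,i+1)$, and $\wp(\FF,i+1)$ is simply not controlled by the hypothesis $\wp(\FF,i)\le |\FF|/(2^s|V|^{s-1})$. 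A single high-degree vertex $v$ whose incident edges all live inside $\{v\}\cup I$ for a fixed $i$-set $I$ has $\wp(\FF,i)$ tiny while $\wp(\FF(v),i)$ is enormous. No amount of slack of the form $|V|^{s-1}$ in the hypothesis helps against this, because the blow-up is in the \emph{number of edges} of an $i$-vertex induced sub-hypergraph, not in any vertex count.

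The paper avoids this entirely by never passing to the link during the recursion. It defines, for a tuple $U$, the sub-hypergraph $\FF^U$ of $\FF$ \emph{on the same vertex set $V$} whose edges are $\{e\in\FF : U\subseteq e\}$; note $|\FF^U|=|\FF(U)|$, but $\FF^U$ lives on $V$ and is a sub-hypergraph of $\FF$, so $\wp(\FF^U,i)\le\wp(\FF,i)$ holds trivially, with no $i\mapsto i+1$ shift. The induction then runs in the opposite order to yours: the inductive hypothesis supplies $\ge i^{s-1}$ many $(s-1)$-tuples $U$ with $|\FF^U|\ge |\FF|/(2n)^{s-1}$; for each such $U$ one applies Lemma~\ref{lelink} to $\FF^U$ (whose sparsity is inherited from $\FF$) to extend $U$ by $\ge i$ vertices. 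If you want to keep your order (single step of Lemma~\ref{lelink} first, then recurse), you could make it work by recursing into $\FF^{(v)}$ on $V$ rather than into the link $\FF(v)$ on $V\setminus\{v\}$; it is the choice to pass to the link that breaks the sparsity transfer, and that is the gap in your proposal.
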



\begin{proof}
Put $n=|V|$. For a tuple $U$ of vertices in $V$ we denote by $\FF^U$ the sub-hypergraph of $\FF$ on $V$ with edge set $\{e \in \FF \,\colon\, U \sub e\}$. Note that $|\FF^U|=|\FF(U)|$.
We proceed by induction on $s$, noting that the induction basis $s=1$ is Lemma~\ref{lelink}. 
For the induction step, let $\FF$ be as in the statement, and note that by the induction hypothesis there are at least $i^{s-1}$ $(s-1)$-tuples $U \in V^{s-1}$ with $|\FF^U|=|\FF(U)| \ge \frac{|\FF|}{(2n)^{s-1}}$.
Fix one such $U=(v_1,\ldots,v_{s-1})$ 
and apply Lemma~\ref{lelink} on the hypergraph $\FF^U$, noting that, as required,
$$\wp(\FF^U,i) \le \wp(\FF,i) \le \frac{|\FF|}{2^{s} n^{s-1}} \le \frac{|\FF^U|}{2},$$
where the first inequality uses the fact that $\FF^U$ is a sub-hypergraph of $\FF$ on $V$.
Thus, $\FF^U$ has at least $i$ vertices $v$ of degree at least $\frac{|\FF^U|}{2n} \ge \frac{|\FF|}{(2n)^{s}}$. 
This means that for each such $v$, the $s$-tuple $U'=(v_1,\ldots,v_{s-1},v)$ satisfies $|\FF(U')| = |\FF^{U'}| \ge \frac{|\FF|}{(2n)^{s}}$.
Going over all $i^{s-1}$ $(s-1)$-tuples $U$ in a similar fashion, we deduce that the total number of $s$-tuples $U'$ as above is at least $i^{s-1} \cdot i$.
This completes the induction step and the proof. 	
\end {proof}

The following lemma gives
a unified lower bound for the summation $\sum_{i=0}^k \binom{x}{i}\g^i$ that is independent of the ratio between $k$ and $x$.
See Section~\ref{sec:aux} in the Appendix for a proof of this lemma.
\begin{lemma}\label{lemma:113}
	For every $k \in \N^+$ and real $0 \le \g \le 1$, $x \ge k$ we have
	$$\sum_{i=0}^k \binom{x}{i}\g^i 
	\ge \frac14\bigg(\sum_{i=0}^k \binom{x}{i}\bigg)^{\log(1+\g)} .$$
	%
\end{lemma}

Finally, we have the following well-known bounds.
\begin{claim}\label{claim:e}
	We have $e^{-2x} \le 1-x \le e^{-x}$, where the upper bound holds for every real $x$ and the lower bound holds for every $0 \le x \le 1/2$.
\end{claim}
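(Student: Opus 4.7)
The plan is to handle the two inequalities separately, since each is a standard consequence of basic properties of the exponential and logarithm.

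For the upper bound $1-x \le e^{-x}$, I would simply invoke the classical tangent-line inequality $e^y \ge 1+y$, valid for every real $y$ (which follows, e.g., from convexity of the exponential), and substitute $y = -x$. This gives the desired inequality with no constraint on $x$.

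For the lower bound $e^{-2x} \le 1-x$ on the range $0 \le x \le 1/2$, the plan is to take logarithms to reduce the claim to the equivalent statement $\ln(1-x) \ge -2x$, and then apply the Mercator series $-\ln(1-x) = \sum_{k \ge 1} x^k/k$. Bounding this termwise by the geometric series $\sum_{k \ge 1} x^k = x/(1-x)$ yields $-\ln(1-x) \le x/(1-x)$, so it remains only to check that $1/(1-x) \le 2$, which is exactly the condition $x \le 1/2$. This is precisely where the restricted range enters the argument.

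I do not anticipate any genuine obstacle; both inequalities are textbook. The only substantive observation is that the constant $2$ in the exponent of the lower bound is dictated by the bound $1/(1-x) \le 2$ on the tail of the logarithmic series when $x \le 1/2$; weakening the range (say to $x \le 1 - 1/c$) would give a corresponding bound $e^{-cx} \le 1-x$ by the same argument.
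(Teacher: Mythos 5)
Your proof is correct. For the upper bound you invoke the standard tangent-line inequality $e^{y}\ge 1+y$, exactly as the paper implicitly does ("well known"). For the lower bound both arguments funnel through the same key numerical estimate $\frac{x}{1-x}\le 2x$ for $0\le x\le 1/2$, but reach it by slightly different routes: the paper reuses its own upper bound, writing
\[
1-x=\Bigl(1+\tfrac{x}{1-x}\Bigr)^{-1}\ge e^{-x/(1-x)}\ge e^{-2x},
\]
i.e.\ applying $1+y\le e^{y}$ with $y=\frac{x}{1-x}$ and taking reciprocals, whereas you pass to logarithms and bound the Mercator series $-\ln(1-x)=\sum_{k\ge1}x^{k}/k$ termwise by the geometric series $\sum_{k\ge1}x^{k}=\frac{x}{1-x}$. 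The paper's self-bootstrap from the first inequality is a touch more economical (no series needed), while your version is arguably more transparent about \emph{why} the constant $2$ appears and generalizes cleanly: replacing $x\le 1/2$ by $x\le 1-1/c$ gives $e^{-cx}\le 1-x$, a point you correctly observe. Either route is sound.
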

\begin{proof}
	The upper bound is well known, and the lower bound follows from it since we have $1-x = \big(1+\frac{x}{1-x}\big)^{-1} \ge e^{-\frac{x}{1-x}} \ge e^{-2x}$, where the last inequality uses $0 \le x \le 1/2$.
\end{proof}

%

\subsection{Sparse Kruskal-Katona Theorem}


In this subsection we prove Theorem~\ref{th:skk}, which is a more precise version of Theorem~\ref{th:skk-intro}.
First, we will need a lemma which extends the classical Kruskal-Katona Theorem~\ref{th:kk} by collecting $i$-subsets from the hypergraph's links.

\newcommand{\II}{\mathcal{I}}

\begin{lemma}\label{lemma:neat}
	Let $\FF$ be a $k$-graph on $V$, and let $t \in \N$.
	For every $t \le i \le k$ we have
	$$\bigg|\binom{\FF}{i}\bigg| \ge i^{-t} \sum_{U \in V^t} \binom{x_U}{i-|U|} $$
	where $x_U$ is given by $|F(U)| = \binom{x_U}{k-|U|}$.
\end{lemma}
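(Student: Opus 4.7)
The plan is to apply the classical Kruskal-Katona Theorem (Theorem~\ref{th:kk}) separately to every link $\FF(U)$ and then sum the contributions over all $t$-tuples $U \in V^t$, controlling the overcount.

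First, I fix a tuple $U \in V^t$ with distinct-vertex set $U' \sub V$ (so $|U'|=|U| \le t$). The link $\FF(U)$ is a $(k-|U|)$-graph on $V \setminus U'$ whose edge count equals $\binom{x_U}{k-|U|}$ by definition of $x_U$. Since $t \le i \le k$ and $|U| \le t$, the integer $i-|U|$ lies in the range $[0, k-|U|]$ required by Kruskal-Katona, so
\[ \bigg|\binom{\FF(U)}{\,i-|U|\,}\bigg| \;\ge\; \binom{x_U}{\,i-|U|\,}. \]
Moreover, every $S \in \binom{\FF(U)}{i-|U|}$ satisfies $S \sub V \setminus U'$ and $U' \cup S \sub e$ for some $e \in \FF$, hence $U' \cup S$ is an $i$-set in $\binom{\FF}{i}$. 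This defines a map $(U,S) \mapsto U' \cup S$ from pairs to $\binom{\FF}{i}$.

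The key step, and the only nontrivial one, is to bound the multiplicity of this map. For any fixed $T \in \binom{\FF}{i}$, a preimage $(U,S)$ must satisfy $U' \sub T$ and $S = T \setminus U'$; thus $S$ is determined by $U$, and $U$ is an arbitrary $t$-tuple of elements of $T$ (entries need not be distinct). The number of such $t$-tuples is at most $|T|^t = i^t$, so every $T$ has at most $i^t$ preimages. Summing the displayed inequality over all $U \in V^t$ and using this multiplicity bound yields
\[ i^t \cdot \bigg|\binom{\FF}{i}\bigg| \;\ge\; \sum_{U \in V^t} \bigg|\binom{\FF(U)}{\,i-|U|\,}\bigg| \;\ge\; \sum_{U \in V^t} \binom{x_U}{\,i-|U|\,}, \]
which gives the claimed inequality after dividing by $i^t$.

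I do not anticipate any real obstacle: the only slight subtlety is the degenerate case $|\FF(U)|=0$, where $x_U$ may not be uniquely determined, but then both sides of the per-link Kruskal-Katona bound are zero and the argument goes through unchanged.
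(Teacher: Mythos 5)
Your proof is correct and matches the paper's argument essentially step for step: apply Kruskal--Katona to each link $\FF(U)$, append $U$ back to form $i$-sets of $\FF$, and bound the overcount by observing that any $i$-set $T$ in the image can only arise from tuples $U \in T^t$, of which there are at most $i^t$. The paper packages the image of your map $(U,S)\mapsto U'\cup S$ as the family $\binom{\FF(U)}{i}^*$, but the underlying reasoning is identical.
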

\begin{proof}
	Put $\II=V^t$, and let $t \le i \le k$.
	Apply the Kruskal-Katona Theorem~(Theorem~\ref{th:kk}) on each link $\FF(U)$ with $U \in \II$. Since $\FF(U)$ is a $(k-|U|)$-graph and $0 \le i-|U|  \le k-|U|$ (using $|U|\le t \le i$ for the lower bound), Theorem~\ref{th:kk} implies that $\big|\binom{\FF(U)}{i-|U|}\big| \ge \binom{x_U}{i-|U|}$.
	Now, for every $U \in \II$ denote 
	$$\binom{\FF(U)}{i}^* = \bigg\{ f \cup U \,\bigg\vert\, f \in \binom{\FF(U)}{i-|U|} \bigg\}.$$ 
	We have that
	$$\binom{\FF(U)}{i}^* \sub \binom{\FF}{i} \quad\text{ and }\quad \bigg|\binom{\FF(U)}{i}^*\bigg| = \bigg|\binom{\FF(U)}{i-|U|}\bigg| \ge \binom{x_U}{i-|U|}.$$
	We therefore deduce that
	$$\bigg|\binom{\FF}{i}\bigg| \ge \bigg|\bigcup_{U \in \II} \binom{\FF(U)}{i}^*\bigg| \ge i^{-t}\sum_{U \in \II} \bigg|\binom{\FF(U)}{i}^*\bigg| \ge i^{-t}\sum_{U \in \II} \binom{x_U}{i-|U|} ,$$
	where, crucially, the penultimate inequality uses the fact that if an $i$-set $g$ appears in $\binom{\FF(U)}{i}^*$ then 
	$U \in g^t$, 
	implying that $g$ appears in at most $i^t$ families $\binom{\FF(U)}{i}^*$.
	This completes the proof.
	%
\end{proof}

%
%
Note that Lemma~\ref{lemma:neat} recovers Theorem~\ref{th:kk} by taking $t=0$.



We prove the following stronger form of Theorem~\ref{th:skk-intro}, our sparse Kruskal-Katona Theorem.

\begin{theo}\label{th:skk}
	Let $\FF$ be a $k$-graph with $n$ vertices and $|\FF|=n^r$ edges, $r \ge 1$. Let $s \in \N$ be the smallest satisfying $\frac{|\FF|}{(2n)^s} < c \cdot \wp(\FF,\, \a n)$ with 
	$c=(8k)^{\ceil{2r}}/\a^{\ceil{r}}$
	,\footnote{One may think of $|\FF|/(2n)^s$ as an approximation (from below) to the average degree of a vertex $s$-tuple in $\FF$. Alternatively, $s$ can be defined as $s=\big\lceil\log\big(\frac{|\FF|}{\sigma}\big)/\log(2n)\big\rceil$ with $\sigma=c \cdot \wp(\FF,\a n)$. 
	} 
	and put $t=s+1$.
	If 
	$$\wp(\FF,\,\a n) \le \min\bigg\{\binom{x}{k-t}n, \,\, \frac{1}{2}|\FF|\bigg\}$$
	with real $x>0$ then for every $t \le i \le k$ we  have 
	$$\bigg|\binom{\FF}{i}\bigg| \ge \frac{1}{C}\cdot \frac{\binom{x}{i-t}}{\binom{x}{k-t}}|\FF|,$$
	with $C = (8k/\a)^{\ceil{4r}} \log n$.
\end{theo}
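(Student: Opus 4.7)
The plan is to apply Lemma~\ref{lemma:neat} with parameter $t=s+1$, which reduces the statement to lower-bounding $\sum_{U\in V^t}\binom{x_U}{i-t}$, where $x_U$ is defined by $|\FF(U)|=\binom{x_U}{k-t}$. Since $\binom{x_U}{i-t}\ge\binom{x}{i-t}$ whenever $x_U\ge x$, the target inequality will follow once we identify a family of ``good'' $t$-tuples $U$ with $|\FF(U)|\ge\binom{x}{k-t}$ that collectively accounts for a positive fraction of the total weight $\sum_U|\FF(U)|=|\FF|\cdot k!/(k-t)!$. The first phase builds many $s$-tuples: Lemma~\ref{lelinkcor} applied with parameter $s$ on $\FF$ produces $(\a n)^s$ ordered $s$-tuples $U_0$ with $|\FF(U_0)|\ge|\FF|/(2n)^s$; its precondition $\wp(\FF,\a n)\le|\FF|/(2^sn^{s-1})$ follows from the minimality of $s$ (which gives $\wp(\FF,\a n)\le|\FF|/(c(2n)^{s-1})$) together with $c\ge 2$.

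The second phase extends each $U_0$ to a $t$-tuple by one vertex. A direct second call to Lemma~\ref{lelink} on the link $\FF(U_0)$ fails: the hypothesis $\wp(\FF(U_0),\a n)\le|\FF(U_0)|/2$ is not controlled, because $\wp(\FF(U_0),\a n)$ only bounds to $\wp(\FF,\a n+s)$ while our sparseness input concerns $\wp(\FF,\a n)$, and pushing this comparison through would require $c=\Omega(n)$. To sidestep this, I would apply the regularization Lemma~\ref{l31} to $\FF(U_0)$, obtaining a cleaned induced sub-hypergraph $\FF'(U_0)$ on some $V_0'\subseteq V\setminus U_0$ with at least half the edges and uniform minimum degree at least $|\FF'(U_0)|/(2|V_0'|\log n)$. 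Every $v\in V_0'$ then yields a $t$-tuple $(U_0,v)$ with $|\FF(U_0,v)|\ge|\FF(U_0)|/(4|V_0'|\log n)$; this is where the $\log n$ factor in $C$ originates.

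In the final phase I would split the $t$-tuples into good ones (satisfying $x_U\ge x$) and bad ones. Via the double-counting identity $\sum_U|\FF(U)|=|\FF|\cdot k!/(k-t)!$, together with the sparseness input $\wp(\FF,\a n)\le\binom{x}{k-t}n$ and the minimality of $s$, one can show that the bad tuples carry only a bounded fraction of the total weight, so the good tuples collectively contribute at least $\Omega(|\FF|/\binom{x}{k-t})$ to the sum after restricting Lemma~\ref{lemma:neat}. Using $\binom{x_U}{i-t}\ge\binom{x}{i-t}$ on the good tuples and absorbing the losses $i^t\le k^t$, $\a^{-t}$, the cleaning $\log n$, and the constant $c=(8k)^{\ceil{2r}}/\a^{\ceil{r}}$ into $C=(8k/\a)^{\ceil{4r}}\log n$ then finishes the proof. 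The main obstacle is the extension step of phase~2, where the regularization lemma is essential to cope with the uncontrolled quantity $\wp(\FF,\a n+s)$; a secondary technical challenge is the bookkeeping in phase~3, quantitatively tying the link-size guarantees from phases~1--2 to the threshold $\binom{x}{k-t}$ so that the constants collapse into the stated form of $C$.
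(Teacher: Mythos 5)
Your phases 1 and 2 match the paper's proof: Lemma~\ref{lelinkcor} is indeed applied to produce roughly $(\alpha n)^s$ many $s$-tuples $U$ with $|\FF(U)| \ge b := |\FF|/(2n)^s$, each link $\FF(U)$ is then regularized via Lemma~\ref{l31} to obtain $\FF(U)_{\mathrm{reg}}$ on $n_U$ vertices with minimum degree at least $\frac{1}{4\log n}\cdot\frac{b}{n_U}$, and extending each $U$ by a vertex of $\FF(U)_{\mathrm{reg}}$ yields $t$-tuples with controlled link size. (A small correction: the paper's Lemma~\ref{lelinkcor} is applied to $\FF^U = \{e \in \FF : U \subseteq e\}$ on the \emph{original} vertex set $V$, so the issue you raise of $\wp(\FF(U_0),\alpha n)$ being uncontrolled never arises in that step; the regularization is needed for a different reason, namely to replace the average degree bound by a lower bound on \emph{all} vertex degrees in the link and to control the vertex count $n_U$.)

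Phase 3, however, contains a genuine conceptual error, and the direction of your ``good'' tuples is exactly backwards. You want tuples with $x_U \ge x$ so that $\binom{x_U}{i-t} \ge \binom{x}{i-t}$ holds pointwise, but in a near-extremal example (a union of cliques of size approximately $x$, as in Theorem~\ref{theo:sKK-UB}) every $t$-tuple $U$ lying inside a clique satisfies $|\FF(U)| = \binom{x-t}{k-t} < \binom{x}{k-t}$, so your set of good tuples is essentially \emph{empty} and the restricted application of Lemma~\ref{lemma:neat} yields a trivial bound. The correct formulation replaces the pointwise inequality by the ratio $\binom{x_U}{i-t}/\binom{x_U}{k-t}$, which is \emph{decreasing} in $x_U$ (Claim~\ref{claim:binom-ratio}); good tuples are therefore those with $x_U \le x$. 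The reason this is the right choice becomes visible once you exploit the normalization baked into phase 2: set $\binom{x_U}{k-t} := \frac{1}{4\log n}\frac{b}{n_U}$ for the $s$-tuple $U$, so that $n_U\binom{x_U}{k-t}$ is the same for all $U$. Each $U$ then contributes $n_U \binom{x_U}{i-t} = n_U \binom{x_U}{k-t}\cdot\frac{\binom{x_U}{i-t}}{\binom{x_U}{k-t}}$, and for $x_U \le x$ this ratio is at least $\binom{x}{i-t}/\binom{x}{k-t}$. Your proposal never invokes this normalization and so cannot use the ratio form. Finally, the claim that ``the bad tuples carry only a bounded fraction'' should be argued as in the paper, by contradiction: if too many of the $s$-tuples $U$ have $x_U > x$, then each has $n_U$ small (since $n_U \binom{x_U}{k-t}$ is fixed), and taking the union of roughly $\alpha n / (n_U + s)$ of the corresponding vertex sets $V(\FF(U)_{\mathrm{reg}}) \cup U$ produces a subset of at most $\alpha n$ vertices inducing more than $\wp(\FF,\alpha n)$ edges. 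The double-counting identity $\sum_{U\in V^t}|\FF(U)| = |\FF|\cdot k!/(k-t)!$ that you invoke does not by itself control the weighted fraction of bad tuples; the collect-and-contradict argument against the sparseness hypothesis is what is actually needed.
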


\begin{remark}
	The parameters in Theorem~\ref{th:skk} satisfy the following relations:
	\begin{equation}\label{eq:sKK-relations}
		t \le \ceil{r} \le k.
	\end{equation}
	For the first inequality, note that otherwise $s=\ceil{r}$ and so, by the definition of $s$, $c \cdot \wp(\FF,\a n) \le |\FF|/(2n)^{s-1} \le n$, implying that $\wp(\FF,\a n) < \a n$ and thus, by averaging, $|\FF| < n$, contradicting the statement's assumption $r \ge 1$. 
	For the second inequality, notice $n^r = |\FF| \le \binom{n}{k} \le n^k$.
\end{remark}


Note that the error term $C$ increases with the quotient $k/\a$ and with $r$.
This precludes us from taking hypergraphs of large uniformity, with many edges, or from inducing on too few vertices.
More formally, we have the following corollary.

\begin{remark}
	Under the assumptions of Theorem~\ref{th:skk}: 
	\begin{enumerate}
		\item If $(k/\a)^r \le (\log n)^{O(1)}$ then $|\FF|/C \ge \tilde{\Omega}(|\FF|)$.
		\item If $k/\a \le n^{o(1)}$ then $|\FF|/C \ge |\FF|^{1-o(1)}$.
	\end{enumerate}
\end{remark}



We now show how to deduce the sparse Kruskal-Katona Theorem from Theorem~\ref{th:skk}. 
\begin{proof}[Proof of Theorem~\ref{th:skk-intro}]
	We have that $\binom{x}{k-\ceil{r}} \le \binom{x}{k-t}$ using~(\ref{eq:sKK-relations}) and since, by assumption, $x \ge 2k$. Thus, the condition here implies the condition in Theorem~\ref{th:skk}.
	As for the guarantee in Theorem~\ref{th:skk}, note that
	\begin{align*}
	\frac{\binom{x}{i-t}}{\binom{x}{k-t}} &= \frac{(k-t)!}{(i-t)!} \bigg(\prod_{j=i}^{k-1}(x-j+t)\bigg)^{-1}\\
	&= \frac{(k-t)!}{(i-t)!} \bigg(\prod_{j=i}^{k-1}(x-j) \left(1+\frac{t}{x-j}\right) \bigg)^{-1} 
	\ge \frac{k^{-t}k!}{i!} \bigg(\bigg(1+\frac{t}{x-(k-1)}\bigg)^{k-i} \cdot \prod_{j=i}^{k-1} (x-j) \bigg)^{-1}\\
	&\ge \frac{k^{-t}k!}{i!} \bigg(\Big(1+\frac{t}{k}\Big)^{k} \cdot \prod_{j=i}^{k-1} (x-j) \bigg)^{-1}
	\ge (ek)^{-t}\frac{k!}{i!} \bigg(\prod_{j=i}^{k-1} (x-j) \bigg)^{-1}
	= (ek)^{-t}\frac{\binom{x}{i}}{\binom{x}{k}} ,
	\end{align*}
	where the second inequality uses the statement's assumption $x \ge 2k$, and the third inequality uses the upper bound in Claim~\ref{claim:e}.
	Thus, multiplying $C$ from Theorem~\ref{th:skk} by $(8k)^{\ceil{r}} 
	\ge (ek)^t$ (recall~(\ref{eq:sKK-relations})) completes the proof.
	%
\end{proof}


\newcommand{\reg}{_\text{reg}}

\begin{proof}[Proof of Theorem~\ref{th:skk}]
	We will prove the implication that if the stronger condition
	\begin{equation}\label{as:st}
	\wp(\FF,\,\a n) \le \min\bigg\{\frac{1}{c}\binom{x}{k-t}\a n,\,\, 
\frac12|\FF|\bigg\},
	\end{equation} 
	(where $c$ is as in the statement of the theorem) holds then for every $t \le i \le k$ we in fact have
	\begin{equation}\label{eq:sKK-first-goal}
	\bigg|\binom{\FF}{i}\bigg| \ge \frac{1}{c\log n}|\FF|\frac{\binom{x}{i-t}}{\binom{x}{k-t}} .
	\end{equation}
	To see why this would complete the proof, let $\tilde x$ satisfy 
	\begin {equation}
	\label {eq:tildex}
	\binom{\tilde x}{k-t} = \frac c \a \binom{x}{k-t},
	\end {equation} 
	so that if $\FF$ satisfies the statement's original assumption that $\wp(\FF,\,\a n) \le \min\big\{\binom{x}{k-t}n,\,\, \frac{1}{2}|\FF|\big\}$ then it satisfies~(\ref{as:st}) with $\tilde x$ replacing $x$.
	Thus, from~(\ref{eq:sKK-first-goal}),
	$$\bigg|\binom{\FF}{i}\bigg| \ge \frac{1}{c\log n}|\FF|\frac{\binom{\tilde x}{i-t}}{\binom{\tilde x}{k-t}} 
	\ge \frac{\a}{c^2\log n}|\FF|\frac{\binom{x}{i-t}}{\binom{x}{k-t}} 
	\ge \frac{1}{C}|\FF|\frac{\binom{x}{i-t}}{\binom{x}{k-t}} ,$$
	where the second inequality uses~(\ref{eq:tildex}) for both the denominator and the
numerator, as~(\ref{eq:tildex}) 
implies $\tilde x \ge x$ since $c/\a \ge 1$.
	
	We henceforth assume~(\ref{as:st}), and our goal is to prove~(\ref{eq:sKK-first-goal}).
	By the definition of $s$ we have 
	\begin{equation}\label{eq:sKK-s}
	\frac{|\FF|}{(2n)^s} < c \cdot \wp(\FF,\, \a n) \le \frac{|\FF|}{(2n)^{s-1}}.
	\end{equation}
	The upper bound in~(\ref{eq:sKK-s}) implies in particular that $\wp(\FF,\, \a n) \le \frac{|\FF|}{2^{s} n^{s-1}}$.
	Thus, by Lemma~\ref{lelinkcor}, there is a family $\UU \sub V(F)^s$ of $s$-tuples 
	$U$ of vertices of $\FF$ satisfying 
	\begin{equation}\label{eq:link}
	|\FF(U)| \ge \frac{|\FF|}{(2n)^s} =: b ,
	\end{equation}
	such that $|\UU| \ge \a^s n^s$.
	For each $U \in \UU$ apply Lemma~\ref{l31} on the $(k-|U|)$-graph $\FF(U)$ to obtain an induced subgraph $\FF(U)\reg$ 
	with
	\begin{equation}\label{eq:link-edges}
	|\FF(U)\reg| > \frac12|\FF(U)| \ge \frac12 b,
	\end{equation}
	such that $\FF(U)\reg$ has $n_U$ vertices and minimum degree at least
	\begin{equation}\label{eq:F_U}
	\frac{1}{4 \log n} \cdot \frac{b}{n_U} =: \binom{x_U}{k-(|U|+1)}
	\end{equation}
	with $x_U>0$.
	Let $t \le i \le k$.
	Applying Lemma~\ref{lemma:neat} with our $t$ to the links of the vertices of each $\FF(U)\reg$, we deduce that
	\begin{align}\label{eq:sKK-main}
	\bigg|\binom{\FF}{i}\bigg| &\ge \frac{1}{i^t}\sum_{U \in \UU} n_U\binom{x_U}{i-(|U|+1)} 
	\ge \frac{1}{4(2k)^t\log n} \frac{1}{n^s}\sum_{U \in \UU}\frac{\binom{x_U}{i-(|U|+1)}}{\binom{x_U}{k-(|U|+1)}} |\FF| ,
	\end{align}
	where the second inequality uses~(\ref{eq:F_U}) together with~(\ref{eq:link}) and the fact that $i \le k$.
	%
	%
	Let 
	$$\UU' = \bigg\{ U \in \UU \,\colon\, \binom{x_U}{k-(|U|+1)} \le \binom{x}{k-t} \bigg\}.$$
	%
	%
%
	For every $U \in \UU'$ we have $\frac{\binom{x_U}{i-(|U|+1)}}{\binom{x_U}{k-(|U|+1)}} \ge k^{-t} \frac{\binom{x}{i-t}}{\binom{x}{k-t}}$,
	which follows from Claim~\ref{claim:binom-ratio}.\footnote{Indeed, take $x$, $y$, $k$, $i$, $\D$ there to be, respectively, $x$, $x_U$, $k-(|U|+1)$, $i-(|U|+1)$,  $t-(|U|+1)$, and bound $(i-(|U|+1))^{-(t-(|U|+1))}$ from below by $k^{-t}$.}
%
	%
	%
	We will show that
	\begin{equation}\label{eq:goal}
	|\UU'| \ge \frac12 \a^s n^s .
	\end{equation}
%
%
%
	%
	By~(\ref{eq:sKK-main}), this would imply that
	$$\bigg|\binom{\FF}{i}\bigg| \ge \frac{\a^s}{8(2k)^{2t}\log n} \frac{\binom{x}{i-t}}{\binom{x}{k-t}} |\FF|
	\ge \frac{1}{c\log n}\frac{\binom{x}{i-t}}{\binom{x}{k-t}} |\FF| ,$$
	where the last inequality uses $8(2k)^{2t}/\a^s \le (8k)^{\ceil{2r}}/\a^{\ceil{r}} = c$
	(recall~(\ref{eq:sKK-relations})).
	Thus, proving~(\ref{eq:goal}) would imply~(\ref{eq:sKK-first-goal}) and complete the proof.
	
	
	Put $S = {\binom{x}{k-t}} \a n$. 
	It remains to prove~(\ref{eq:goal}). 
	Assume for contradiction that $|\UU \setminus \UU'| \ge \frac12(\a n)^s$.
	Note that by definition of $\UU'$ together with~(\ref{eq:F_U}) 
we deduce that for every $U \in \UU \setminus \UU'$ we have 
	$$
n_U \le \frac{1}{4 \log n} \cdot \frac{b}{\binom{x}{k-t}} \leq
\frac12\cdot\frac{b}{\binom{x}{k-t}} = \frac{b}{S} \cdot \frac12\a n =: n_0.
$$
	Note that from the lower bound in~(\ref{eq:sKK-s}) together with~(\ref{as:st}) we deduce that $b \le S$.
	Since $n_0 \ge n_U \ge 1$, we have that $n_0$ satisfies
	\begin{equation}\label{eq:sKK-n0-bounds}
	1 \le n_0 \le \frac12 \a n .
	\end{equation}
	Put
	\begin{equation}\label{eq:sKK-ell}
	\ell = \frac12\cdot\begin{cases}
	\frac{\a n}{n_0+s} 	& \text{ if } s \ge 1\\
	1					& \text{ if } s = 0
	\end{cases}
	\end{equation}
	%
	Note that~(\ref{eq:sKK-n0-bounds}) implies 
	\begin{equation}\label{eq:sKK-l-bounds}
	1 \le \frac{\a n}{n_0+s} \le \a n ,
	\end{equation}
	where the lower bound further uses the fact that $s \le r$ and the fact that we may assume $r \le \a n/2$ as otherwise there is nothing to prove\footnote{Otherwise $C \ge |\FF|$, and so the statement's lower bound on $\big|\binom{\FF}{i}\big|$ is trivially true since $\frac{\binom{x}{i-t}}{\binom{x}{k-t}} \le \frac{\binom{k-t}{i-t}}{\binom{k-t}{k-t}} \le \binom{k}{i}$, where the first inequality uses the decreasing monotonicity of the function $z \mapsto \binom{z}{a}/\binom{z}{b}$ with $a \le b \le z$.}.
	Let $\UU^* \sub \UU \setminus \UU'$ be an arbitrary subset with $|\UU^*|=\ceil{\ell}$, 
	which is well defined as $\ell \le \frac12 (\a n)^s \le |\UU \setminus \UU'|$; here, the first inequality is immediate for $s=0$ by~(\ref{eq:sKK-ell}), and for $s\ge 1$ follows from the upper bound in~(\ref{eq:sKK-l-bounds}) together with the bound $\a n \le (\a n)^s$.
	For each $U \in \UU$ denote $I_U = V\big(\FF(U)\reg\big) \cup U$, and note that
	\begin{equation}\label{eq:liin}
	\FF[I_U] = \big\{e \cup U \mid e  \in \FF(U)\reg\big\}.
	\end{equation}
	Let $I = \bigcup_{U \in \UU^*} I_U$ denote the union of these sets of vertices. Then $I$ satisfies that
	$$|I| \le \sum_{U \in \UU^*} (n_U+s) \le \ceil{\ell} (n_0+s) \le 2\ell(n_0+s) \le \a n ,$$
	where the penultimate inequality uses the lower bound $\ell \ge \frac12$ from~(\ref{eq:sKK-l-bounds}).
	Moreover, $I$ satisfies that
	$$|\FF[I]| \ge \Big|\bigcup_{U \in \UU^*} \FF[I_U] \Big|
	\ge k^{-s} \sum_{U \in \UU^*} \big|\FF[I_U]\big|
	= k^{-s} \sum_{U \in \UU^*} \big|\FF(U)\reg\big|
	> k^{-s} \cdot \frac12 b \ceil{\ell},$$
	where the second inequality uses the fact that
	$e \in \FF[I_U]$ for at most $k^s$ $s$-tuples $U$ 
	(by~(\ref{eq:liin}), $e \in \FF[I_U]$ implies $U \sub e$), and the third inequality uses~(\ref{eq:link-edges}).
	Now, if $s=0$ then $\ceil{\ell}=1$ and $b=|\FF|$, hence we get $|\FF[I]| > |\FF|/2 \ge \wp(\FF,\a n)$ using~(\ref{as:st}), a contradiction.
	Otherwise, we get
	$$|\FF[I]| > \frac{1}{8 k^{s+1}} \cdot b\frac{\a n}{n_0}
	= \frac{1}{4k^{s+1}} S \ge \frac{1}{c}S \ge \wp(\FF,\,\a n),$$
	where the first inequality uses~(\ref{eq:sKK-ell}) and bounds $n_0+s \le 2n_0k$ (as $n_0 \ge 1$ by~(\ref{eq:sKK-n0-bounds}) and $s \le k$ by~(\ref{eq:sKK-main})), 
	the equality uses~(\ref{eq:sKK-n0-bounds}), 
	and the last inequality uses~(\ref{as:st}).
	We thus again obtain a contradiction.
	%
	This completes the proof. 
\end{proof}


We have the following important corollary of Theorem~\ref{th:skk}.

\begin{coro}
	\label{co:exptr}
	Let $\FF$ be a $k$-uniform hypergraph with $n$ vertices and $|\FF|=n^r$ edges, $r \ge 1$. 
	If 
	$$\wp(\FF,\,\a n) \le \min\Big\{B n,\,\, \frac12|\FF|\Big\}$$
	with real $B>0$, then for every $0 \le \gamma \le 1$ the expected trace of $\FF$ on a 
	uniformly random subset
	of $\g n$ vertices is at least
	$$\frac{1}{C'} \cdot \frac{|\FF|}{B^{1-\log(1+\gamma)}},$$
	with $C' = (8k/\a\g)^{\ceil{5r}}\log n$, provided $k \le \sqrt{\g n}$.
	\end{coro}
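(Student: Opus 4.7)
The plan is to lower bound the expected trace by summing contributions from shadows of different sizes on $I$. I would first apply Theorem~\ref{th:skk-intro} (sparse Kruskal-Katona) to $\FF$ by choosing a real $x \ge 2k$ with $\binom{x}{k-\ceil r} = B$, which satisfies the sparsity hypothesis $\wp(\FF,\a n) \le \binom{x}{k-\ceil r}n$ of the theorem. This yields
\[
\bigg|\binom{\FF}{i}\bigg| \ge \frac{1}{C}\cdot\frac{\binom{x}{i}}{\binom{x}{k}}|\FF| \qquad \text{for every } r+1 \le i \le k.
\]

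Next, for a uniformly random $I$ of size $\g n$, I would observe that the expected number of $i$-subsets of the shadow $\binom{\FF}{i}$ lying inside $I$ equals $|\binom{\FF}{i}|\binom{n-i}{\g n-i}/\binom{n}{\g n}$, which for $i \le k \le \sqrt{\g n}$ is $(1-o(1))\g^i|\binom{\FF}{i}|$. These shadow-elements contribute to $|\FF_I|$ through the natural map $T\mapsto e_T\cap I$ using a witness edge $e_T\in\FF$ with $T\sub e_T$: each resulting $S\in\FF_I$ is the image of at most $\binom{|S|}{i}\le\binom{k}{i}$ such $T$ of size $i$, so summing $\sum_i\binom{k}{i}=2^k$ gives $|\FF_I|\ge 2^{-k}\sum_i|\binom{\FF}{i}[I]|$ in every outcome, where $\binom{\FF}{i}[I]=\{T\in\binom{\FF}{i}:T\sub I\}$.

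Substituting the sparse Kruskal-Katona bound into the sum and invoking Lemma~\ref{lemma:113} (together with the estimate $\sum_{i=0}^k \binom{x}{i}\ge\binom{x}{k}$ valid whenever $k\le x/2$, and absorbing the $i\le r$ terms), I would obtain $\sum_{i=r+1}^k \binom{x}{i}\g^i \ge \Omega(\binom{x}{k}^{\log(1+\g)})$, leading to
\[
\Ex|\FF_I| \ge \frac{|\FF|}{C''\,\binom{x}{k}^{1-\log(1+\g)}}
\]
for some intermediate constant $C''$. The last step is to convert $\binom{x}{k}$ into $B$ using the ratio $\binom{x}{k}/B = \binom{x}{k}/\binom{x}{k-\ceil r} \le (x/k)^{\ceil r}$, and absorb the resulting factor $(x/k)^{\ceil r (1-\log(1+\g))}$ into $C'$ via the extra $(1/\g)^{\ceil{5r}}$ slack present in $C'=(8k/\a\g)^{\ceil{5r}}\log n$ compared with the constant $C=(8k/\a)^{\ceil{5r}}\log n$ coming from Theorem~\ref{th:skk-intro}.

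The main obstacle is the careful accounting of multiplicative losses: the $2^k$ from the shadow-to-trace step, the hypergeometric approximation error controlled by $k\le\sqrt{\g n}$, and the $(x/k)^{\ceil r(1-\log(1+\g))}$ factor from the final $\binom{x}{k}$-to-$B$ conversion, must all be dominated by the inflation of $C$ to $C'$. Bounding $x$ from $\binom{x}{k-\ceil r}=B\le |\FF|/(2n)$ shows that $x/k$ grows only as a small power of $n$, while the $(1/\g)^{\ceil{5r}}$-slack grows much faster when $\g$ is small; this slack should also dominate $2^k$ in the regime $k\le\sqrt{\g n}$, though it is likely that the actual proof uses a sharper shadow-to-trace argument to bypass the $2^k$ loss altogether.
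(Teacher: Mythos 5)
Your high-level plan (sparse Kruskal--Katona shadow bound + probability that a fixed $i$-set lands in a random $\gamma n$-set + Lemma~\ref{lemma:113}) matches the paper's, but there are two genuine gaps, one of which you flag yourself.

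The decisive one is the $2^k$ shadow-to-trace loss. You are right that you cannot afford it: with $k$ as large as $\sqrt{\gamma n}$, $2^k$ swamps the polylogarithmic-in-$n$ inflation from $C$ to $C'$, and there is no ``sharper shadow-to-trace argument'' hiding in the paper. The point you are missing is that no such step is needed at all. For a \emph{down-closed} family $\GG$ one has $\GG_I=\{T\in\GG: T\subseteq I\}$ exactly, so with $\GG=\bigcup_{i=0}^k\binom{\FF}{i}$ the identity
\[
\Ex_I\,\big|\GG_I\big|=\sum_{i=0}^k\Big|\tbinom{\FF}{i}\Big|\cdot\frac{\binom{n-i}{q-i}}{\binom{n}{q}}
\]
holds with equality: each $T$ contributes exactly once, there is no overcounting and no ``witness edge'' to choose. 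The paper takes this sum as its starting point and calls it ``the expected trace'', and this is also the quantity that is used in the proof of Theorem~\ref{th:main}, where the ambient hypergraph $\HH$ is down-closed (so $\trace(\HH,\alpha n)\ge\Ex_I|\GG_I|$). Your map $T\mapsto e_T\cap I$ discards this exact identity and reintroduces, needlessly, a factor that cannot be absorbed.

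The second gap concerns the use of Theorem~\ref{th:skk-intro}. You set $\binom{x}{k-\ceil{r}}=B$ and then insist $x\ge 2k$; but $B$ can be arbitrarily small (the hypotheses only give $B>0$), so this choice of $x$ need not satisfy $x\ge 2k$, and then Theorem~\ref{th:skk-intro} does not apply. The paper instead invokes Theorem~\ref{th:skk}, whose hypothesis only needs $x>0$ and $\binom{x}{k-t}=B$ with $t=s+1\le\ceil{r}$. This also removes your third difficulty: since Theorem~\ref{th:skk} gives the bound directly in terms of $\binom{x}{i-t}/\binom{x}{k-t}$, the sum telescopes to $\gamma^t\sum_{j\ge 0}\binom{x}{j}\gamma^{j}$ and Lemma~\ref{lemma:113} outputs the correct denominator $\binom{x}{k-t}^{1-\log(1+\gamma)}=B^{1-\log(1+\gamma)}$, with only a harmless extra factor $\gamma^{-t}\le\gamma^{-\ceil{r}}$ to push into $C'$. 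Your detour through $\binom{x}{k}$ and the conversion $\binom{x}{k}/\binom{x}{k-\ceil{r}}\le(x/k)^{\ceil{r}}$ is both unnecessary and unaffordable: when $k-\ceil{r}$ is small, $x$ grows like a power of $B$ (hence of $n$), and $(x/k)^{\ceil{r}}$ is not dominated by the $(1/\gamma)^{\ceil{5r}}$ slack.
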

	
	\begin {proof}
	Write $B = \binom{x}{k-t}$ with $x>0$ with $t \in \N$ as in Theorem~\ref{th:skk}, so that 
	$\wp(\FF,\,\a n) \le \min\big\{\binom{x}{k-t}n,\,\, \frac{1}{2}|\FF|\big\}$.
	The expected trace of $\FF$ on a uniformly random set of $q = \g n$ vertices is, by Theorem~\ref{th:skk},
	\begin{align*}
	\sum_{i=0}^k \bigg| \binom{\FF}{i}\bigg| \frac{\binom{n-i}{q-i}}{\binom{n}{q}} &\ge \sum_{i=0}^k \bigg| \binom{\FF}{i}\bigg| \g^i e^{-1}
	\ge \frac{|\FF|}{eC\binom{x}{k-t}}\sum_{i=t}^k \binom{x}{i-t} \g^i \\
	&= \frac{|\FF|}{eC\binom{x}{k-t}}\g^t\sum_{j=0}^{k-t} \binom{x}{j} \g^{j} 
	\ge \frac{\g^t |\FF|}{4eC\binom{x}{k-t}}\binom{x}{k-t}^{\log(1+\g)}
	= \Big(\frac{\g^t}{4eC}\Big)\frac{|\FF|}{B^{1-\log(1+\g)}}
	\end{align*} 
	with $C= (8k/\a)^{\ceil{4r}}\log n$ as in Theorem~\ref{th:skk},
	where the first inequality uses (recall the lower bound in Claim~\ref{claim:e} and the statement's assumption $k \le \sqrt{q}$)
	$$\frac{\binom{n-i}{q-i}}{\binom{n}{q}} = \frac{(n-i)!}{n!}\frac{q!}{(q-i)!} = \prod_{j=0}^{i-1} \frac{q-j}{n-j}
	\ge \Big(\frac{q}{n}\Big)^i \prod_{j=0}^{i-1} (1-j/q)
	\ge \g^i \prod_{j=0}^{i-1} e^{-2j/q} = \g^i e^{-i^2/q} \ge \g^i e^{-1},$$
	%
	and the last inequality uses Lemma~\ref{lemma:113}. 
	Using~(\ref{eq:sKK-relations}) to bound $t$, the proof follows.
	\end {proof}
	
	Note that the statement in Corollary~\ref{co:exptr} does not depend on $k$, the uniformity of the hypergraph, except in the error term $C'$. In fact, in order for this statement to be meaningful, $C'$ should be negligible, so $k$ should be relatively small, e.g., poly-logarithmic in $n$.
	
	\begin{remark}
	If the bound in Corollary~\ref{co:exptr} is tight, then, in the proof of Theorem~\ref{th:skk}, all the inequalities become (essentially) equalities. In the proof of Theorem~\ref{th:skk}, we showed that the number of $s$-tuples in $\UU'$ is at least $\frac12(\a n)^s$.
	It is then easy to verify that for half of these $s$-tuples $U$, we have $t_U \approx t_0, x_U \approx x$ and then $x \approx 2k$. (Quantifying this statement precisely requires more details, which we omit here.)
	Note that this remark applies to Corollary~\ref{co:exptr} but not to Theorem~\ref{th:skk} itself, as evident from the matching upper bound in Subsection~\ref{sec:sKK-UB}.
	\end{remark}


\subsection{Tight lower bound for traces}\label{sec:traces}

Recalling the definition of $\C$ in~(\ref{eq:C}), we now prove the lower bound in our main result Theorem~\ref{th:main}.

\begin{theo} 
	\label{th:main}
	Let $r \ge 1$, $\a \in (0,1]$. If $r, \a^{-1} \le n^{\d}$ then
	$\trace(n,\,n^r,\,\a n) \ge n^{\C(1-O(\d))}$.
	Moreover, if $r=O(1)$, $\a^{-1} \le (\log n)^{O(1)}$ then $\trace(n,\,n^r,\,\a n) \ge \tilde\Omega(n^\C)$.
\end{theo}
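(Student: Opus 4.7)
The plan is to first reduce to the case when $\FF$ is down-closed via standard push-down shifts---replacing any $e \in \FF$ by $e \setminus \{v\}$ whenever $v \in e$ and $e \setminus \{v\} \notin \FF$---which preserves $|\FF|$ and does not increase $\trace(\FF, \a n)$. For a down-closed family, $|\FF_I| = |\FF[I]|$ for every $I$. Moreover, down-closedness forces $2^e \sub \FF$ and hence $|\FF| \ge 2^{|e|}$ for every $e \in \FF$, so every edge has size at most $K \le \log|\FF| = r \log n$, which under the hypothesis $r \le n^{\d}$ is $n^{o(1)}$. Consequently $\FF$ has at most $n^{o(1)}$ nonempty uniformity layers $\FF_k = \{e \in \FF : |e|=k\}$, and by pigeonhole the popular layer $\FF_{k^*}$ satisfies $|\FF_{k^*}| \ge |\FF|/(K+1) \ge n^{r-o(1)}$.

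I would then apply Corollary~\ref{co:exptr} to the $k^*$-uniform hypergraph $\FF_{k^*}$ with projection parameter $\g = \a$. Since $\FF_{k^*} \sub \FF$ and $\FF$ is down-closed, both $(\FF_{k^*})_I \sub \FF_I$ and $\FF_{k^*}[I] \sub \FF[I]$ hold, so any lower bound on $|(\FF_{k^*})_I|$ or on $|\FF_{k^*}[I]|$ also lower-bounds $|\FF_I|$. The core dichotomy is: either the sparseness condition $\wp(\FF_{k^*}, \a n) \le Bn$ holds, in which case Corollary~\ref{co:exptr} gives an expected trace at least $|\FF_{k^*}|/(C' B^{1-\log(1+\a)})$; or the condition fails, and the subset $I$ witnessing $\wp(\FF_{k^*}, \a n) > Bn$ forces $|\FF[I]| \ge |\FF_{k^*}[I]| > Bn$ directly. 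Setting $B$ at the balance point $B = (|\FF_{k^*}|/(C'n))^{1/(2-\log(1+\a))}$, the two lower bounds coincide, so in either branch
\[
\trace(\FF, \a n) \;\ge\; Bn \;=\; n^{(r'+1-\log(1+\a))/(2-\log(1+\a))} \Big/ C'^{\,1/(2-\log(1+\a))}
\]
with $|\FF_{k^*}| = n^{r'}$.

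Substituting $r' \ge r-o(1)$ and the estimate $C' = n^{O(\d)}$ (valid under $r,\a^{-1} \le n^{\d}$ and $k^* \le K \le n^{o(1)}$), this matches the target $n^{\mu(1-O(\d))}$; in the sharper regime $r = O(1)$, $\a^{-1} \le (\log n)^{O(1)}$ the losses become polylogarithmic, yielding the $\tilde\Omega(n^{\mu})$ refinement. The main technical hurdle is the bookkeeping that ensures all $n^{o(1)}$-type losses aggregate to at most $n^{\mu \cdot O(\d)}$; the crucial combinatorial leverage is the bound $K \le r \log n$ on the maximum edge size in a down-closed family, for without it the single-layer reduction would only yield $|\FF_{k^*}| \ge n^{r-1-o(1)}$, translating to a trace exponent $\mu - 1/(2-\log(1+\a))$ rather than $\mu$. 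A secondary subtlety is verifying that push-down shifts indeed do not increase the maximum trace, but this follows from a standard case analysis that is orthogonal to the main argument.
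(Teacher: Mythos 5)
Your argument is essentially the paper's: reduce to a down-closed family $\HH$, pass to the most populous uniformity layer $\FF=\HH_{k^*}$ (exploiting that down-closedness forces $k^*\le\log|\HH|=r\log n$, so there are only $O(r\log n)$ layers), apply Corollary~\ref{co:exptr} with $\g=\a$, and balance $Bn$ against $|\FF|/(C'B^{1-\log(1+\a)})$. The one structural difference is cosmetic: the paper sets $B$ via $\trace(\FF,\a n)=Bn$, which makes the sparsity hypothesis $\wp(\FF,\a n)\le Bn$ automatic (since $\wp(\FF,\a n)\le\trace(\FF,\a n)$) and then \emph{solves} the resulting inequality for $B$, whereas you fix $B$ at the balance point in advance and argue a dichotomy; these are equivalent.

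There is, however, one hypothesis you do not discharge: Corollary~\ref{co:exptr} requires $\wp(\FF,\a n)\le\min\{Bn,\ \tfrac12|\FF|\}$, and your dichotomy only covers the $Bn$ half. The paper handles this by first assuming $\trace(\FF,\a n)\le\tfrac12|\FF|$ and disposing of the opposite case directly, since then $\trace(\HH,\a n)\ge\tfrac12|\FF|=\tilde\Omega(n^r)\ge\tilde\Omega(n^{\C})$ (valid because $\C\le r$ for $r\ge1$); you need the analogous clause. Separately, your claim $C'=n^{O(\d)}$ is a bit optimistic --- the exponent is really $O(\d r)$ --- but, as the paper's computation of $C''$ shows, this still lands at $n^{O(\d)\C}$ after dividing by $2-\log(1+\a)$ since $r/(2-\log(1+\a))\le\C$, so the ``bookkeeping hurdle'' you flag does close.
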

\begin{proof}
	Let $\HH$ be a down-closed hypergraph\footnote{Also called monotone. A hypergraph $\HH$ is down-closed if for every edge $\HH$, all its subsets are also edges of $\HH$.} on $n$ vertices with $|\HH|=n^r$.
	We will prove lower bounds on $\trace(\HH,\a n)$, which would complete the proof since we may assume the given hypergraph is down-closed (this is standard, see, e.g.,~\cite{Al}).
	Put $\HH_i = \big\{e \in \HH \,\big\vert\, |e|=i\big\}$, and let $\FF=\HH_k$ where $k$ maximizes $|\HH_k|$.
	Observe that since $\HH$ is down-closed we have $|\HH| \ge 2^k$, which implies that $k \le \log|\HH|$, and therefore $|\FF| \ge |\HH|/(\log|\HH|+1)$.
	We assume $\trace(\FF,\,\a n) \le \frac12|\FF|$, as otherwise we are done since
	$$\trace(\HH,\a n) \ge \trace(\FF,\a n) \ge \frac12|\HH|/(\log|\HH|+1) = \tilde{\Omega}(n^r).$$ 
	
	Now, write $\trace(\FF,\,\a n) =B n$ with $B > 0$.
	Since $\wp(\FF,\,\a n) \le \trace(\FF,\,\a n)$ and since $k \le \sqrt{\a n}$ (i.e., $k^2/\a \le n$) for all large enough $n$, we apply Corollary~\ref{co:exptr} with $\g=\a$ to obtain that
	$$B n = \trace(\FF,\,\a n) 
	\ge \frac{1}{C'}\frac{|\FF|}{B^{1-\log(1+\a)}},$$
	with $C' =  (8k/\a^2)^{\ceil{5r}}\log n$, where the inequality uses the fact that the maximal trace on $\alpha n$ vertices is at least as large as the expected trace on a random subset 
	of $\a n$ vertices. 
	This gives a lower bound on $B$; we therefore deduce
	\begin{align*}
	\trace(\HH,\,\a n) &\ge \trace(\FF,\,\a n) = B n 
	\ge \Big(\frac{1}{C'}|\FF|\cdot \frac{1}{n}\Big)^{\frac{1}{2-\log(1+\a)}} \cdot n\\
	&= \Big(\frac{1}{C'}|\FF| n^{1-\log(1+\a)}\Big)^{\frac{1}{2-\log(1+\a)}}
	\ge \Big(\frac{1}{C''}|\HH|n^{1-\log(1+\a)}\Big)^{\frac{1}{2-\log(1+\a)}} 
	\ge \frac{1}{C''} \cdot n^{\C},
	\end{align*}
	with $C''=(\log|\HH|+1)C' \le (8\log|\HH|/\a^2)^{\ceil{5r}}\log n \le (8r\log n/\a^2)^{\ceil{6r}}$. 
	Now, if $r,\a^{-1} \le n^{\d}$ then $C'' \le n^{O(\d r)}$, which implies that $\trace(\HH,\,\a n) \ge n^{\C(1-O(\d))}$.
	Moreover, if $r \le O(1)$, $\a^{-1} \le (\log n)^{O(1)}$ then $C'' \le (\log n)^{O(1)}$, which implies that 
	$\trace(\HH,\,\a n) \ge \tilde\Omega(n^{\C})$.
	This completes the proof.
\end{proof}

\begin{remark*}
	In the proof of Theorem~\ref{th:main} it seems tempting to write $\trace(\FF,\alpha_0 n) = B_0 n$ with $\alpha_0 \ll \alpha$, so that $B_0 \ll B$. Then, Corollary~\ref{co:exptr} would mean that the expected trace on $\alpha n$ random vertices would be roughly $\frac{|\FF|}{B_0^{1-\log(1+\a)}} \gg \frac{|\FF|}{B^{1-\log(1+\a)}},$ which seemingly contradicts the fact that our lower bound is tight!
	The reason this cannot happen is that, for any $\alpha_0$ that is not too small, though the \emph{average} trace on $\alpha_0 n$ vertices is substantially smaller than that on $\alpha n$ vertices, the \emph{maximal} traces can actually be the same. Indeed, in the upper bound construction one can show that $\trace(\FF,\alpha_0 n) \approx \trace(\FF,\alpha n)$ for any $\alpha_0 \le \alpha$ that is not too small.
\end{remark*}


\section{Upper Bounds}

\subsection{Upper Bound for the Sparse Kruskal-Katona Theorem}\label{sec:sKK-UB}

In this subsection we show that the parameters in Theorem~\ref{th:skk} are best possible up to the error term. Formally, we prove the following. 

\begin{theo}[Upper bound for sparse Kruskal-Katona]\label{theo:sKK-UB}
	Let $n,k,x \in \N^+$, $r \ge 1$ and $0 < \a \le 1$ with 
	$3r \le k \le x \le n^{1/6}$
	and $n \le \a^k n^r \le \binom{x}{k}n$.
	There exists a $k$-graph $\FF$ with $n$ vertices, $|\FF|=n^r$ edges, 
	and 
	$\wp(\FF,\,\a n) \le 6\binom{x}{k}n$
	such that for every $0 \le i \le k$ we have 
	$\big|\binom{\FF}{i}\big| \le \frac{\binom{x}{i}}{\binom{x}{k}}|\FF|$.
\end{theo}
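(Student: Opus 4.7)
The plan is to construct $\FF$ as the edge set of a union of $N\approx n^r/\binom{x}{k}$ random $x$-cliques on $[n]$. Concretely, I would sample $C_1,\dots,C_N\in\binom{[n]}{x}$ independently and uniformly, and set $\FF=\bigcup_{j=1}^N\binom{C_j}{k}$. The key virtue of this construction is that the shadow bound will hold deterministically, while only the induced-density bound needs the randomness and a union bound.

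For the shadow bound, I would observe that whenever $\FF=\bigcup_j\binom{C_j}{k}$ with each $|C_j|=x$, every $i$-subset $S\in\binom{\FF}{i}$ lies inside some $C_j$ and hence has at least $\binom{x-i}{k-i}$ supersets of size $k$ among the edges of $\FF$. Double-counting pairs $(S,e)$ with $|S|=i$, $S\subseteq e\in\FF$ therefore gives $\binom{x-i}{k-i}\bigl|\binom{\FF}{i}\bigr|\le\binom{k}{i}|\FF|$, which rearranges to $\bigl|\binom{\FF}{i}\bigr|\le\binom{x}{i}/\binom{x}{k}\cdot|\FF|$. Crucially, this argument is indifferent to how many edges the cliques share.

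For the induced-density bound, I would fix $I\in\binom{[n]}{\a n}$ and use $|\FF[I]|\le\sum_j\binom{|C_j\cap I|}{k}$. Since $|C_j\cap I|$ is hypergeometric, $\Ex\bigl[\binom{|C_j\cap I|}{k}\bigr]=\binom{x}{k}\binom{\a n}{k}/\binom{n}{k}\le\a^k\binom{x}{k}$, so $\Ex[|\FF[I]|]\le N\binom{x}{k}\a^k\le\a^k n^r\le\binom{x}{k}n$ by the hypothesis $\a^k n^r\le\binom{x}{k}n$. After rescaling the summands to lie in $[0,1]$, the multiplicative Chernoff bound yields $\Pr\bigl[|\FF[I]|\ge 6\binom{x}{k}n\bigr]\le(e/6)^{6n}=2^{-\Omega(n)}$. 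A union bound over the at most $\binom{n}{\a n}\le 2^n$ subsets $I$ then leaves positive probability that $\wp(\FF,\a n)\le 6\binom{x}{k}n$ simultaneously for every $I$.

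The main obstacle I anticipate is reconciling $|\FF|$ with the exact requirement $|\FF|=n^r$, since overlaps between random cliques make $|\FF|$ slightly smaller than $N\binom{x}{k}$. Under the hypotheses $x\le n^{1/6}$ and $k\ge 3r$, however, the expected number of $k$-sets lying in two or more cliques is at most $\binom{N}{2}\binom{x}{k}^2/\binom{n}{k}=O(n^{2r}/\binom{n}{k})=o(n^r)$ (using $\binom{n}{k}\ge(n/k)^k\gg n^r$ when $k\ge 3r$ and $\log k\le \log n/6$), so the $C_j$'s are nearly edge-disjoint and $|\FF|=(1-o(1))N\binom{x}{k}$. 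A small upward adjustment of $N$ then brings $|\FF|$ to $n^r$, while the $(1+o(1))$ slack in the other bounds is absorbed into the explicit constant $6$ in the statement.
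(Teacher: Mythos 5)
Your construction and probabilistic analysis are identical in essence to the paper's: a union of $N\approx n^r/\binom{x}{k}$ random size-$x$ cliques, with $\wp(\FF,\a n)$ controlled by the hypergeometric expectation bound, Chernoff after rescaling the per-clique contributions to $[0,1]$, and a union bound over $\binom{n}{\a n}\le 2^{n-1}$ subsets $I$. Where you genuinely diverge is in the shadow bound. The paper conditions on the event $E_1$ that all cliques pairwise intersect in fewer than $3r\le k$ elements, so that the cliques are edge-disjoint, $|\FF|=\ell\binom{x}{k}$, and then $\big|\binom{\FF}{i}\big|\le\ell\binom{x}{i}=\frac{\binom{x}{i}}{\binom{x}{k}}|\FF|$ becomes trivial. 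You instead prove the shadow bound \emph{deterministically} for any union of $x$-cliques: every $S\in\binom{\FF}{i}$ lies in some clique, hence has $\ge\binom{x-i}{k-i}$ $k$-supersets in $\FF$, and double-counting pairs $(S,e)$ gives $\binom{x-i}{k-i}\big|\binom{\FF}{i}\big|\le\binom{k}{i}|\FF|$, which indeed rearranges (via the Vandermonde-type identity $\binom{k}{i}/\binom{x-i}{k-i}=\binom{x}{i}/\binom{x}{k}$) to the desired bound regardless of overlap. This is a nice simplification: it decouples the shadow estimate from the disjointness event entirely, whereas the paper needs $E_1$ to hold with probability at least $1/2$ before taking the conjunction with $E_2$. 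You do still need the near-disjointness estimate $\binom{N}{2}\binom{x}{k}^2/\binom{n}{k}=o(n^r)$, but only to argue $|\FF|\approx N\binom{x}{k}$ and hence tune $N$ to hit $n^r$ edges. One caveat applies to both arguments: the final step of producing a subhypergraph with \emph{exactly} $n^r$ edges while preserving the stated (constant-free) shadow inequality requires a little more care than either proof spells out, since deleting edges shrinks $|\FF|$ but not necessarily $\big|\binom{\FF}{i}\big|$ proportionally; in practice one should remove whole cliques and account for the remainder, which costs only a negligible $(1+O(1/\ell))$ factor.
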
 

We will need the following lemma relating hypergeometric and binomial random variables.
\begin{lemma}\label{lemma:HG}
	Let $H$ be a hypergeometric random variable with parameters 
$(n,x,y)$,\footnote{I.e., $H=|X \cap Y|$ where $X \sub [n]$ is a 
uniformly random subset of size $x$ and $Y \sub [n]$ is a fixed 
subset of size $y$.} 
	and let $B$ be a binomial random variable with parameters $(x,\, y/n)$.
	If $x \le \sqrt{n}$ then for every $0 \le h \le x$ we have $\Pr[H=h] \le 2\Pr[B=h]$.
\end{lemma}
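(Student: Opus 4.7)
The plan is to compute the ratio $\Pr[H=h]/\Pr[B=h]$ directly. Writing
$$\Pr[H=h]=\frac{\binom{y}{h}\binom{n-y}{x-h}}{\binom{n}{x}}, \qquad \Pr[B=h]=\binom{x}{h}\Big(\frac{y}{n}\Big)^{\!h}\Big(1-\frac{y}{n}\Big)^{\!x-h},$$
and using the standard identity
$$\frac{\binom{y}{h}\binom{n-y}{x-h}}{\binom{n}{x}}=\binom{x}{h}\cdot \frac{y^{(h)}(n-y)^{(x-h)}}{n^{(x)}},$$
where $a^{(k)}=a(a-1)\cdots(a-k+1)$ denotes the falling factorial, the $\binom{x}{h}$ factors cancel and the ratio telescopes into
$$\frac{\Pr[H=h]}{\Pr[B=h]}=\prod_{j=0}^{h-1}\Big(1-\frac{j}{y}\Big)\cdot\prod_{j=0}^{x-h-1}\Big(1-\frac{j}{n-y}\Big)\cdot\prod_{j=0}^{x-1}\Big(1-\frac{j}{n}\Big)^{\!-1}.$$
The first two products have all factors in $[0,1]$ and are thus each at most $1$, so the entire bound reduces to showing that the third product is at most $2$ under the assumption $x\le\sqrt{n}$.

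For this last step I would take logarithms. The crude estimate $-\log(1-t)\le 2t$ from Claim~\ref{claim:e} gives only $e^{x(x-1)/n}\le e$, which is too weak for the desired constant $2$. I would instead use the sharper second-order bound $-\log(1-t)\le t+t^2$, valid for $t\in[0,1/2]$ (a one-line calculus check), which is applicable here since $j/n\le x/n\le 1/\sqrt{n}\le 1/2$ for all large $n$. Summing yields
$$-\sum_{j=0}^{x-1}\log\!\Big(1-\frac{j}{n}\Big)\le \frac{x(x-1)}{2n}+\frac{x^3}{3n^2}\le \frac{1}{2}+o(1),$$
and since $\log 2>1/2$, exponentiating gives that the third product is at most $2$ for $n$ sufficiently large.

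The only delicate point is the constant: the first-order log inequality already available in the paper produces $e$, so the argument genuinely needs a slightly sharper (second-order) Taylor bound. Everything else is routine factorial manipulation and telescoping, and the hypothesis $x\le\sqrt{n}$ enters exactly once, at the very end, to turn $x^2/n$ into a bounded quantity.
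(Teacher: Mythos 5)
Your proof is correct, and your exact factorization of the ratio $\Pr[H=h]/\Pr[B=h]$ into three telescoping products is a slightly cleaner starting point than the paper's. Both arguments end up having to bound an error factor of order $e^{\Theta(x^2/n)}$ by $2$, but they get there differently. The paper first relaxes each factor of the two ``numerator'' products to $y/n$ and $(n-y)/(n-h)$ respectively, so that the residual error is $(1-h/n)^{-(x-h)}$, an $h$-dependent quantity; it then uses the crude first-order estimate $(1-t)^{-1} \le e^{2t}$ (Claim~\ref{claim:e}) together with the observation $h(x-h) \le x^2/4$ to land at $e^{x^2/(2n)} \le e^{1/2} < 2$. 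You instead keep an exact identity, observe that two of the three products are at most $1$, and are then left with the $h$-free product $\prod_{j<x}(1-j/n)^{-1}$; since $\sum_{j<x} j \approx x^2/2$ rather than $x^2/4$, the first-order estimate would only give $e$, so you must invoke the sharper second-order bound $-\ln(1-t)\le t+t^2$ on $[0,1/2]$ (a valid one-line calculus check, which you correctly flag as not in the paper). Both routes are sound; yours trades a small extra Taylor-type lemma for a more symmetric algebraic identity and for bounds that are uniform in $h$, whereas the paper's stays within Claim~\ref{claim:e} at the cost of the $h(x-h)\le x^2/4$ trick.
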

\begin{proof}
	We have
	\begin{align*}
	\Pr[H=h] &= 
	\frac{\binom{y}{h}\binom{n-y}{x-h}}{\binom{n}{x}} 
	=\frac{\binom{x}{h}\binom{n-x}{y-h}}{\binom{n}{y}}
	= \binom{x}{h}\frac{(n-x)!}{n!}\cdot\frac{y!}{(y-h)!}\cdot\frac{(n-y)!}{(n-y-(x-h))!}\\
	&= \binom{x}{h}\frac{\prod_{t=0}^{h-1} y-t}{\prod_{t=0}^{h-1} n-t} 
	\cdot \frac{\prod_{t=0}^{x-h-1} n-y-t}{\prod_{t=0}^{x-h-1} n-h-t}
	\le \binom{x}{h}\bigg(\frac{y}{n}\bigg)^h\bigg(\frac{n-y}{n-h}\bigg)^{x-h} \\
	&\le \binom{x}{h}\bigg(\frac{y}{n}\bigg)^h\bigg
	(\frac{n-y}{n}\bigg)^{x-h} e^{(2h/n)\cdot(x-h)}
	\le 2\binom{x}{h} \bigg(\frac{y}{n}\bigg)^h\bigg
	(\frac{n-y}{n}\bigg)^{x-h}.
	\end{align*}
	where the first inequality uses that $h \le y$ (as otherwise 
$\Pr[H=h]=0$ and there is nothing to prove), the second inequality 
uses the lower bound in Claim~\ref{claim:e}
as $h \le x \le \sqrt{n} \le n/2$ (the last inequality assumes $\sqrt{n} \ge 2$, for otherwise $x \le 1$ in which case $H=B$ so there is nothing to prove),
and the third inequality uses the fact that $h(x-h) \le x^2/4 \le n/4$
	as $x \le \sqrt{n}$. 
	Recalling that $B$ is a binomial random variable with parameters $(x,\a)$ with $\a=y/n$, we deduce
	$$\Pr[H=h] \le 2\binom{x}{h} \a^h(1-\a)^{x-h} = 2\Pr[B=h] ,$$
	as desired.
\end{proof}

We will make use of the following version of Chernoff's bound
(c.f., e.g., \cite{AS}, Appendix A).
\begin{claim}\label{claim:Chernoff}
	Let $X_1,\ldots,X_n$ be mutually independent random variables with $X_i \in [0,1]$,
	and put $X=\sum_{i=1}^n X_i$, $\mu = \Ex[X]$.
	Then $\Pr(X \ge 6x) \le \exp(-x)$ for every $x \ge \mu/3$.
\end{claim}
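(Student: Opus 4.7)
The plan is to prove this by the classical exponential-moment (Chernoff) argument, choosing the free parameter so that the specific constants $3$ and $6$ in the hypothesis and conclusion line up. First I would apply Markov's inequality to the nonnegative random variable $e^{tX}$ for a parameter $t>0$ to be chosen, giving
$$\Pr[X \ge 6x] \;\le\; e^{-6tx}\,\Ex[e^{tX}].$$
Independence of the $X_i$ factors the moment generating function: $\Ex[e^{tX}] = \prod_{i=1}^n \Ex[e^{tX_i}]$.

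Next I would bound each factor using that $X_i \in [0,1]$. Since $z \mapsto e^{tz}$ is convex, the chord through $(0,1)$ and $(1,e^t)$ dominates the curve on $[0,1]$, so $e^{tz} \le 1 + (e^t-1)z$ for $z \in [0,1]$. Taking expectations and using $1+u \le e^u$ gives $\Ex[e^{tX_i}] \le \exp\bigl((e^t-1)\,\Ex[X_i]\bigr)$. Multiplying over $i$ and substituting into the Markov bound yields
$$\Pr[X \ge 6x] \;\le\; \exp\bigl((e^t-1)\mu - 6tx\bigr).$$

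Finally I would choose $t=\ln 2$, so that $e^t-1=1$ and the bound simplifies to $\exp(\mu - 6x\ln 2)$. Using the hypothesis $\mu \le 3x$ together with the numerical inequality $6\ln 2 - 3 > 1$ (indeed $6\ln 2 \approx 4.159$), this becomes
$$\Pr[X \ge 6x] \;\le\; \exp\bigl(-(6\ln 2 - 3)\,x\bigr) \;\le\; e^{-x},$$
as claimed.

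There is no substantive obstacle here, as this is essentially the textbook multiplicative Chernoff bound. The only mildly delicate point is the choice of $t$: the naive $t=1$ is just barely too small ($e-1\approx 1.72$, which would require $\mu \le 5x/(e-1)\approx 2.91\,x$, marginally below the hypothesis $\mu \le 3x$), so one takes a slightly larger exponent such as $t=\ln 2$ to secure the needed numerical slack. Equivalently, one can quote the standard form $\Pr[X \ge (1+\delta)\mu] \le e^{-\delta\mu/3}$ valid for $\delta \ge 1$, applied with $1+\delta = 6x/\mu \ge 2$, so that $\delta\mu/3 = (6x-\mu)/3 \ge x$.
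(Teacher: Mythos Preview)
Your proof is correct and follows essentially the same approach as the paper: both derive the claim from the standard multiplicative Chernoff bound. The paper simply quotes the black-box inequality $\Pr(X-\mu \ge y) \le \exp(-y/3)$ for $y \ge \mu$ (with $y=3x$), whereas you supply a self-contained exponential-moment derivation with the explicit choice $t=\ln 2$; the substance is the same.
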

\begin{proof}
	Put $y = 3x \ge \mu$. Then
	$\Pr(X \ge 6x) = \Pr(X \ge 2y) \le \Pr(X-\mu \ge y) \le \exp(-y/3) = \exp(-x),$
	where the second inequality is Chernoff's large-deviation bound (again using $y \ge \mu$).
\end{proof}

\begin{proof}[Proof of Theorem~\ref{theo:sKK-UB}]
	Let $\ell \in \N$ satisfy
	\begin{equation}\label{eq:UB-identity-KK}
	\frac{n^r}{\binom{x}{k}} \le \ell \le \frac{n}{\a^k},
	\end{equation}
	which is well defined by the statement's upper bound on $\a^k n^r$.
	%
	%
	Let $S_1,\ldots,S_\ell$ be $\ell$ independent uniformly random size-$x$ subsets of $[n]$.
	We let $\FF$ be the $k$-graph on $[n]$ consisting of a complete $k$-graph on each $S_j$, that is, $\FF = \big([n],\,\bigcup_{j=1}^\ell \binom{S_j}{k}\big)$.

	We next analyze the random $k$-graph $\FF$ constructed above.
	Let $E_1$ be the event that every two sets $S_j,S_{j'}$ $(1 \le j\neq j' \le \ell)$ intersect in fewer than $t:=3r$ elements, 
	and let $E_2$ be the event that $\wp(\FF,\, \a n) \le 6\binom{x}{k}n$. 
	We first show that the proof would follow by proving that 
	\begin{equation}\label{eq:KK-goal}
	\Pr(E_1 \text{ and } E_2) > 0 .
	\end{equation}
	To see this first note that, by construction, $\big|\binom{\FF}{i}\big| \le \sum_{j=1}^\ell \binom{|S_j|}{i} = \ell\binom{x}{i}$ for every $0 \le i \le k$.
	Furthermore, note that the event $E_1$ implies that the cliques $\binom{S_j}{k}$ are (edge-)disjoint by the statement's assumption $k\ge t$, and so
	\begin{equation}\label{eq:UB-disj}
	E_1 \text{ implies } |\FF|=\ell \binom{x}{k}.
	\end{equation}
	Therefore, (\ref{eq:KK-goal}) implies the existence of an $n$-vertex $k$-graph $\FF$ satisfying:
	\begin{itemize}
		\item $|\FF| = \ell \binom{x}{k} \ge n^r$, using~(\ref{eq:UB-disj}) and the lower bound in~(\ref{eq:UB-identity-KK}),
		\item $\wp(\FF,\, \a n) \le 6\binom{x}{k}n$, and
		\item $\big|\binom{\FF}{i}\big| \le \ell\binom{x}{i} = \frac{\binom{x}{i}}{\binom{x}{k}}|\FF|$, using~(\ref{eq:UB-disj}),
	\end{itemize}
	from which the proof immediately follows by taking an arbitrary subgraph of $\FF$ with $n^r$ edges.
	
	
	To prove~(\ref{eq:KK-goal}) we first claim that
	\begin{equation}\label{eq:F-size-KK}
	\Pr(E_1) \ge \frac12 .
	\end{equation}
	Denote by $B$ the binomial random variable with parameters $(x,p)$ where $p=x/n$. 
	For every $j \neq j'$ we have
	%
	\begin{align*}
	\Pr\big(|S_j \cap S_{j'}| \ge t\big) &\le 2\Pr\big(B \ge t\big) \le 2\binom{x}{t}p^{t} \\
	&\le (xp)^{t} = x^{2t}/n^t = x^{6r}/n^{3r} \le n^{- 2r} \le \ell^{-2},
	\end{align*}
	where the first inequality uses Lemma~\ref{lemma:HG} using the statement's upper bound on $x$,
	the penultimate inequality again uses the statement's upper bound on $x$,
	and the last inequality uses the upper bound in~(\ref{eq:UB-identity-KK}) together with the statement's lower bound on $\a^k n^r$.
	This implies, by taking the union bound over all $\binom{\ell}{2}$ unordered pairs $1 \le j\neq j' \le \ell$, that with probability at least $\frac12$ all set pairs $S_j,S_{j'}$ with $j \neq j'$ intersect at fewer than $3r$ elements. This proves~(\ref{eq:F-size-KK}).

	We now show that $\Pr(E_2) \ge 1/2$ (that is, that $\wp(\FF,\,\a n) \le 6\binom{x}{k}n$ except with probability smaller than $1/2$), which would prove~(\ref{eq:KK-goal}). 
	Fix $I \sub [n]$ of size $q=\a n$.
	Note that $\FF[I] = \bigcup_{j=1}^\ell \binom{S_j \cap I}{k}$.
	For each $1 \le j \le \ell$ consider the random variable $X_j = \binom{|S_j \cap I|}{k}$, let $X=\sum_{j=1}^\ell X_j$, and note that $|\FF[I]| \le X$.
	We have
	\begin{align*}
	\Ex(X_j) &= \sum_{h=k}^x \binom{h}{k}\Pr[|S_j \cap I|=h] 
	\le 2\sum_{h=k}^x\binom{x}{h}\binom{h}{k} \a^h(1-\a)^{x-h}\\
	&= 2\sum_{h=k}^x\binom{x}{k}\binom{x-k}{h-k} \a^h(1-\a)^{x-h}
	= 2\a^k\binom{x}{k}\sum_{j=0}^{x-k}\binom{x-k}{j} \a^j(1-\a)^{(x-k)-j}
	= 2\a^k\binom{x}{k},
	\end{align*}
	where the first inequality follows from Lemma~\ref{lemma:HG} using the statement's upper bound on $x$.
	Thus, by linearity of expectation,
	\begin{equation}\label{eq:Ex-KK}
	\Ex(X) \le \ell \cdot 2\a^k \binom{x}{k} \le 2\binom{x}{k}n,
	\end{equation}
	where the second inequality uses the upper bound in~(\ref{eq:UB-identity-KK}).
	%
	%
	Since $X_j \le \binom{x}{k}$ for every $1 \le j \le \ell$, 
	we have that $X/\binom{x}{k}$ is a sum of mutually independent random variables each in $[0,1]$.
	We thus apply Claim~\ref{claim:Chernoff} on $X/\binom{x}{k}$, using the fact that $n \ge \frac13\Ex(X/\binom{x}{k})$ by~(\ref{eq:Ex-KK}), to deduce that
	$$\Pr\Big[X \ge 6\binom{x}{k}n\Big] = \Pr\Big[X/\binom{x}{k} \ge 6n\Big] \le \exp(-n) < 2^{-n}.$$
	Using the union bound over all $\binom{n}{\a n} \le \frac12 2^n$ choices of $I \sub [n]$ with $|I|=\a n$ we deduce 
	that, except with probability smaller than $1/2$, for every $I \sub [n]$ with $|I|=\a n$ it holds that $|\FF[I]| \le 6\binom{x}{k}n$. As mentioned before, together with~(\ref{eq:F-size-KK}) this proves~(\ref{eq:KK-goal}) and so we are done.
\end{proof}

\subsection{Traces upper bound}\label{subsec:UB-traces}

In this subsection we complete the proof of Theorem~\ref{t12} by proving the upper bound
$\trace(n,\,n^r,\,\a n) \le O(n^{\C})$, with $\C$ as in~(\ref{eq:C}), for every $r \le \sqrt{n}/\log n$ and $0 \le \a \le 1$.
A proof can be obtained from the proof of Theorem~\ref{theo:sKK-UB} with some effort. For completeness, we give here a self-contained proof.

Put $x=(\C-1)\log n$.
Let $S_1,\ldots,S_\ell$ be $\ell$ independent uniformly random subsets of $[n]$, each of size $x$,
where
\begin{equation}\label{eq:UB-identity}
\frac12\ell := \frac{n^r}{2^x} = \frac{n^{\C}}{(1+\a)^x}.
\end{equation}
%
Let the family $\FF \sub 2^{[n]}$ consist of the union over $j$ of 
all subsets of the set $S_j$; that is, $\FF = \bigcup_{j=1}^\ell 2^{S_j}$.
Let $E_1$ be the event that $|\FF| \ge n^r$, and let $E_2$ be the event that  $\trace(\FF,\,\a n) \le 8n^\C$.
Note that the proof would follow by showing that 
\begin{equation}\label{eq:traces-goal}
\Pr(E_1 \text{ and } E_2) > 0.
\end{equation}
First, we claim that
\begin{equation}\label{eq:F-size}
\Pr(E_1) \ge \frac12 .
\end{equation}
Put $t=3r$.
Denote by $B$ the binomial random variable with parameters $(x,p)$ where $p=x/n$. For every $j \neq j'$ we have
\begin{align*}
\Pr(|S_j \cap S_{j'}| \ge t) &\le 2\Pr(B \ge t) \le 2\binom{x}{t}p^{t} \\
&\le (xp)^{t} = x^{6r}/n^{3r} \le n^{-2r} \le \ell^{-2}, 
\end{align*}
where the first inequality uses Lemma~\ref{lemma:HG} together with the fact that
\begin{equation}\label{eq:UB-x-sqrtn}
x \le r\log n \le \sqrt{n} 
\end{equation}
by the assumed upper bound on $r$, 
and the last inequality uses~(\ref{eq:UB-identity}).
%
Conditioned on the above we have, by taking the union bound over all $\binom{\ell}{2}$ pairs of sets, that
$$|\FF| \ge \ell \bigg(2^x-\binom{x}{\le t}\bigg) \ge \ell \cdot \frac12 2^x = n^r ,$$
where the second inequality uses $t = 3r \le x/2$ (for all $n$ large enough), and the equality uses~(\ref{eq:UB-identity}). This proves~(\ref{eq:F-size}).



We now show that $\Pr(E_2) \ge 1/2$ (that is, $\trace(\FF,\,\a n) \le 8n^\C$ except with probability smaller than $1/2$), thus proving~(\ref{eq:traces-goal}).
Fix $I \sub [n]$ of size $q=\a n$.
Note that 
$$\FF_I = \bigcup_{j=1}^\ell \{ S \cap I \,\vert\, S \sub S_j \} = \bigcup_{j=1}^\ell 2^{S_j \cap I}.$$
For each $1 \le j \le \ell$ consider the random variable $X_j = |2^{S_j \cap I}|$, 
let $X=\sum_{j=1}^\ell X_j$ and note that $|\FF_I| \le X$.
We have
%
\begin{align*}
\Ex(X_j) &= \sum_{h=0}^x 2^h\Pr[|S_j \cap I|=h]
\le 2\sum_{h=0}^x 2^h \binom{x}{h} \a^h (1-\a)^{x-h}\\
&= 2(1-\a)^x\sum_{h=0}^x\binom{x}{h} \Big(\frac{2\a}{1-\a}\Big)^h
= 2(1-\a)^x \Big(1 + \frac{2\a}{1-\a}\Big)^x
= 2(1+\a)^x,
\end{align*}
where the first inequality uses Lemma~\ref{lemma:HG} together with~(\ref{eq:UB-x-sqrtn}).
Thus, by linearity of expectation and by~(\ref{eq:UB-identity}),
\begin{equation}\label{eq:Expectation}
\Ex(X) \le \ell \cdot 2(1+\a)^x  = 4n^{\C} .
\end{equation}
Note that, by our choice of $x$ at the beginning of the proof,
\begin{equation}\label{eq:UB-traces-normalized}
n^\C/2^x = n^\C/2^{(\C-1)\log n} = n.
\end{equation}
Since $X_j \le 2^x$ for every $1 \le j \le \ell$, 
we have that $X/2^x$ is a sum of mutually independent random variables each in $[0,1]$.
We thus apply Claim~\ref{claim:Chernoff} on $X/2^x$, using the fact that $\Ex(X/2^x) \le 4n$ by~(\ref{eq:Expectation}) and~(\ref{eq:UB-traces-normalized}), to deduce that
$$\Pr\big[X \ge 8 n^\C \big] = \Pr\big[X/2^x \ge 8n \big] \le \exp\big(-(4/3)n\big) < 2^{-n}.$$
Using the union bound over all $\binom{n}{\a n} \le \frac12 2^n$ choices of $I \sub [n]$ with $|I|=\a n$ we deduce 
that, except with probability smaller than $1/2$, for every $I \sub [n]$ with $|I|=\a n$ it holds that $|\FF_I| \le 8n^\C$. As mentioned before, together with~(\ref{eq:F-size-KK}) this proves~(\ref{eq:KK-goal}) and so we are done.



\section{Applications}\label{sec:applications}

In this section we give two easy applications of our results on traces, in geometry and in graph theory.

\subsection{Separating halfspaces using few points}
We recall the necessary definitions and the statement of this application.
Let $\HH$ be a family of halfspaces in $\R^d$, and let $P$ be a set of points in $\R^d$. We say that $P$ \emph{separates} $\HH$ if for every pair of distinct halfspaces $H_1 \neq H_2 \in \HH$ there is a point in $P$ that lies in one and outside the other. 
Given $P$ and $\HH$ such that $P$ separates $\HH$, it is interesting to ask how few points in $P$ can we choose while still separating many of the hyperplanes in $\HH$.

\begin{prop}\label{pr:tr}
	Let $P \subset \R^d$ be a set of $n$ points and let $\HH$ be a family of $n^r$ halfspaces in $\R^d$, with $1 \le r \le n^\d$, such that $P$ separates $\HH$.  
	Then there exists a subset $P' \subseteq P$ of at most $n^{1-\d}$ points and
	a subset $\HH' \subseteq \HH$ of at least $n^{\frac{r+1}{2}(1-O(\d))}$ halfspaces such that $P'$ separates $\HH'$.
%
\end{prop}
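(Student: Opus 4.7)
The plan is to translate the statement into a trace problem and invoke the main lower bound (Theorem~\ref{th:main}). First, I set up an incidence hypergraph: define $\FF \subseteq 2^P$ by $\FF = \{H \cap P \colon H \in \HH\}$. The hypothesis that $P$ separates $\HH$ means that for any distinct $H_1, H_2 \in \HH$, some point $p \in P$ lies in exactly one of them, which is exactly the assertion that $H_1 \cap P \neq H_2 \cap P$. Consequently, the map $H \mapsto H \cap P$ is injective, so $|\FF| = |\HH| = n^r$ and $\FF$ is an $n$-vertex hypergraph with $n^r$ edges.

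Next, I apply Theorem~\ref{th:main} with $\a := n^{-\d}$. The hypothesis $r \le n^\d$ is given, and $\a^{-1} = n^\d$ fits the theorem's assumption, so the theorem guarantees the existence of a vertex subset $I \subseteq P$ of size $\a n = n^{1-\d}$ with
$$|\FF_I| \,\ge\, \trace(n, n^r, \a n) \,\ge\, n^{\,\C(1 - O(\d))}, \qquad \C = \frac{r+1-\log(1+\a)}{2-\log(1+\a)}.$$
I then set $P' := I$, which has size at most $n^{1-\d}$ as required, and choose, for each element of $\FF_I$, one halfspace $H \in \HH$ with $H \cap P'$ equal to that element. The resulting subfamily $\HH' \subseteq \HH$ has $|\HH'| = |\FF_I|$ and pairwise distinct projections onto $P'$, so $P'$ separates $\HH'$.

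Finally, I translate $n^{\C(1-O(\d))}$ into the claimed $n^{\frac{r+1}{2}(1-O(\d))}$. Cross-multiplying by the positive quantity $2(2 - \log(1+\a))$ shows that the inequality $\C \ge (r+1)/2$ is equivalent to $(r-1)\log(1+\a) \ge 0$, which holds since $r \ge 1$ and $\a > 0$. Hence $|\HH'| \ge n^{\C(1-O(\d))} \ge n^{\frac{r+1}{2}(1-O(\d))}$, completing the proof.

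I do not expect any serious obstacle: all the difficulty is absorbed by Theorem~\ref{th:main}, and the application is essentially bookkeeping. The only small point to verify carefully is that $\a = n^{-\d}$ keeps the error factor $(1 - O(\d))$ in the exponent, which it does, and that the elementary inequality $\C \ge (r+1)/2$ lets us replace $\C$ by the cleaner exponent $(r+1)/2$ in the final bound.
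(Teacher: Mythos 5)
Your proof is correct and follows essentially the same route as the paper: encode the halfspace--point incidences as a hypergraph $\FF$ on $P$, note that separation is exactly injectivity of $H \mapsto H \cap P$, invoke Theorem~\ref{th:main} with $\a^{-1} = n^{\d}$, and pick one representative halfspace per projected set. The only addition is that you spell out the elementary verification $\C \ge (r+1)/2$ (equivalent to $(r-1)\log(1+\a)\ge 0$), which the paper uses implicitly in the chain $n^{\C(1-O(\d))}\ge n^{\frac{r+1}{2}(1-O(\d))}$.
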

\begin {proof}
Let $\FF$ be the hypergraph on $P$ with edge set $\{H \cap P \,\vert\, H \in \HH\}$. 
By assumption, for any pair of distinct halfspaces $H_1 \neq H_2 \in \mathcal H$ there is a point $p \in P$ such that $p\in H_1$ and $p\not\in H_2$, or $p \in H_2$ and $p \notin H_1$. In particular, $H_1\cap P \ne H_2 \cap P$, and therefore $|\FF| = |\HH| = n^r$. 
Applying Theorem~\ref{th:main} with $r,\a^{-1} = n^{\d}$, there exists a subset $P' \subseteq P$ of size $n^{1-\d}$ such that $|\FF_{P'}| \ge n^{\C(1-O(\d))} \ge n^{\frac{r+1}{2}(1-O(\d))}$. 
Note that $\FF_{P'}=\{H \cap P' \,\vert\, H \in \HH\}$. Let $\HH' \subseteq \HH$ be obtained by assigning to each member $Q$ of $\FF_{P'}$ an arbitrary halfspace $H' \in \HH'$ with $H' \cap P' = Q$. 
By construction, $|\HH'|=|\FF_{P'}|$, hence it remains to show that $P'$ separates $\HH'$.
Again by construction, for every pair of distinct halfspaces $H_1 \neq H_2 \in \HH'$ we have $H_1 \cap P' \neq H_2 \cap P'$. 
This means that there exists a point $p' \in P'$ such that either $p' \in H_1$ and $p' \notin H_2$, or $p' \in H_2$ and $p' \notin H_1$, thus completing the proof. 
%
\end {proof}

In fact, halfspaces can be replaced in Proposition~\ref{pr:tr} by any family of subsets of $\R^d$, as long as the set of points $P$ separates them.
Indeed, in the proof of Proposition~\ref{pr:tr} we considered for each halfspace only the subset of points in $P$ that are contained in it. The condition that $P$ separates $\mathcal H$ implies that all the corresponding subsets of $P$ are distinct, and that is all that is really needed.
However, it does seem reasonable to expect that for ``favorable'' geometric objects, such as halfspaces, better bounds hold---as we believe is the case.

\begin{remark}
	It is worth noting that the number of halfspaces in 
Proposition~\ref{pr:tr} is necessarily polynomial 
if the dimension $d$ is constant; in fact, an exact formula is known~\cite{Ha}, 
which in particular implies that the number of halfspaces in $\R^d$ 
separated by $n$ points is at most $O(n^d)$. As mentioned above, 
the assertion in Proposition~\ref{pr:tr} holds also when we replace 
halfspaces by any family of $n^r$ subsets, for example, general 
convex sets. 
\end{remark}

%

\subsection{Retaining independent sets in induced subgraphs}\label{subsec:app-graphs}

An \emph{independent set} in a graph is a vertex subset that spans no edges.
Given a graph $G$, one can ask how many independent sets are retained in small subgraphs of $G$. 
Using our result for traces, we easily obtain that if $G$ has $2^{n^{o(1)}}$ independent sets then it must have a subset of at most $n^{1-\d}$ vertices, with $\d$ a sufficiently small constant, retaining asymptotically more than square root of the number of independent sets.

\begin{prop}\label{prop:app-graphs}
	Let $G = (V,E)$ be an $n$-vertex graph, and assume that the number of independent sets in $G$ is $n^r$ with $1 \le r \le n^\d$.
	Then there exists a subset $V' \subseteq V$ of at most $n^{1-\d}$ vertices such that the number of independent sets in the induced subgraph $G[V']$ is at least $n^{\frac{r+1}{2}(1-O(\d))}$.
\end{prop}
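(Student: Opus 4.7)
The plan is to follow essentially the same template as the proof of Proposition~\ref{pr:tr}: associate to $G$ a natural set system on $V$ whose trace on any $V' \subseteq V$ is precisely what we want to count, and then invoke the trace lower bound (Theorem~\ref{th:main}).

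First I would let $\FF \sub 2^{V}$ be the family of \emph{all} independent sets of $G$, so $|\FF|=n^r$. The key (one-line) observation is that for any $V' \sub V$ the trace $\FF_{V'}$ coincides with the family of independent sets of $G[V']$: if $I\in\FF$ then $I\cap V'$ is a subset of the independent set $I$ and hence independent (and a fortiori independent in $G[V']$), while conversely any independent set $J$ of $G[V']$ is independent in $G$, hence lies in $\FF$, and satisfies $J=J\cap V'\in\FF_{V'}$. Thus bounding the number of independent sets of $G[V']$ from below is the same as bounding $|\FF_{V'}|$ from below.

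Now I would apply Theorem~\ref{th:main} to $\FF$ with $\a = n^{-\d}$ (so $\a n = n^{1-\d}$) and with the given $r \le n^{\d}$. Since $\FF$ is down-closed (a subset of an independent set is independent), the hypothesis of the theorem is met, and we obtain a subset $V' \sub V$ with $|V'| = n^{1-\d}$ such that $|\FF_{V'}| \ge n^{\C(1-O(\d))}$, where
\[
\C = \frac{r+1-\log(1+\a)}{2-\log(1+\a)}.
\]
A short calculation shows $\C - \tfrac{r+1}{2} = \frac{(r-1)\log(1+\a)}{2(2-\log(1+\a))} \ge 0$ for $r \ge 1$, so $\C \ge (r+1)/2$ and therefore $|\FF_{V'}| \ge n^{\frac{r+1}{2}(1-O(\d))}$, which by the key observation is exactly the claimed lower bound on the number of independent sets of $G[V']$.

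There is no real obstacle here; the proposition is a direct consequence of Theorem~\ref{th:main} once one notices that ``trace of the independent-set family'' is the same as ``independent sets of the induced subgraph'', which relies only on the fact that the induced subgraph inherits edges (hence non-edges are preserved). The only thing to watch is that the error in replacing $\C$ by $(r+1)/2$ is absorbed into the $(1-O(\d))$ factor, which is immediate since $\log(1+\a) = O(\a) = O(n^{-\d})$ is negligible compared to the $\d$ loss already present in Theorem~\ref{th:main}.
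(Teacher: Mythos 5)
Your proposal is correct and follows essentially the same route as the paper: form the hypergraph $\FF$ of all independent sets of $G$, note that $\FF_{V'}$ is exactly the family of independent sets of $G[V']$ since $\FF$ is down-closed, and invoke Theorem~\ref{th:main} with $r,\a^{-1}=n^\d$. Your explicit check that $\C\ge (r+1)/2$ (via $\C-\tfrac{r+1}{2}=\frac{(r-1)\log(1+\a)}{2(2-\log(1+\a))}\ge 0$) is a small addition the paper omits but is exactly the right justification for that final inequality.
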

\begin{proof}
	Let $\FF$ be the hypergraph on $V$ whose edges are the independent sets in $G$.
	Apply Theorem~\ref{th:main} with $r,\a^{-1} = n^\d$ to deduce that there is a subset $S$ of $V(\FF)=V$ with $n^{1-\d}$ vertices such that $|\FF_S|$, the number of projections of the edges of $\FF$ onto $S$, is at least 
	$n^{\mu(1-O(\d))} \ge n^{\frac{r+1}{2}(1-O(\d))}$.
	The proof follows by observing that, by definition and construction, 
	\begin{align*}
	\FF_S &=\{I \cap S \,\colon\, I \in \FF \} \\ 
	&=\{ I \cap S \,\colon\, I \text{ is an independent set of } G \}
	= \{ I \,\colon\, I \text{ is an independent set of } G[S] \} .
	\end{align*}
\end{proof}

We note that, since Theorem~\ref{th:main} is such an abstract statement, 
we could have replaced the notion of independent sets in a graph 
by any other monotone family (i.e., 
a family closed under taking subsets) of vertex subsets. 
Thus, for example, a statement analogous to 
Proposition~\ref{prop:app-graphs} holds for independent sets in 
hypergraphs, for subsets inducing a subgraph not containing some 
fixed graph $H$, and more.


%

\subsection*{Acknowledgement} We would like to thank Michael Langberg for useful discussions.

\appendix

\section{Proof of Auxiliary Lemmas}\label{sec:aux}

For the proof of Theorem~\ref{th:skk} we use the following bound.

\begin{claim}\label{claim:binom-ratio}
	Let $x,y>0$ and $k,i,\Delta \in \N$ with $\D \le i \le k$.
	If $\binom{y}{k} \le \binom{x}{k-\D}$ then $\frac{\binom{y}{i}}{\binom{y}{k}} \ge i^{-\D} \frac{\binom{x}{i-\D}}{\binom{x}{k-\D}}$.
\end{claim}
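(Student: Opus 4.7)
The plan is to rewrite both ratios via a standard binomial identity and then reduce the claim to the single comparison $y\le x+\Delta$. Applying the identity $\binom{z}{a}/\binom{z}{b}=\binom{b}{a}/\binom{z-a}{b-a}$ (valid for integers $0\le a\le b$ and real $z$; easily verified from $\binom{z}{a}\binom{z-a}{b-a}=\binom{z}{b}\binom{b}{a}$) to both ratios, the desired inequality becomes equivalent to
\[
\frac{\binom{y-i}{k-i}}{\binom{x-i+\Delta}{k-i}} \;\le\; i^{\Delta}\cdot\frac{\binom{k}{i}}{\binom{k-\Delta}{i-\Delta}}.
\]
Writing $\binom{k}{i}/\binom{k-\Delta}{i-\Delta}=\bigl(k(k-1)\cdots(k-\Delta+1)\bigr)/\bigl(i(i-1)\cdots(i-\Delta+1)\bigr)$, which is at least $1$ since $k\ge i$, and noting that $i(i-1)\cdots(i-\Delta+1)\le i^{\Delta}$, the right-hand side above is at least $1$.

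The key step is then to show $y\le x+\Delta$, for then $\binom{y-i}{k-i}\le\binom{x-i+\Delta}{k-i}$ by monotonicity of $\binom{\cdot}{k-i}$, so the left-hand side is at most $1$ and we are done. To derive $y\le x+\Delta$ from the hypothesis, I would first establish that $\binom{x+\Delta}{k}\ge\binom{x}{k-\Delta}$ whenever $x\ge k-\Delta$. A direct product expansion and cancellation of common factors shows
\[
\frac{\binom{x+\Delta}{k}}{\binom{x}{k-\Delta}} \;=\; \frac{(x+1)(x+2)\cdots(x+\Delta)}{k(k-1)\cdots(k-\Delta+1)},
\]
and for $x\ge k-\Delta$ the $j$-th numerator factor $x+j$ is at least the $j$-th denominator factor $k-\Delta+j$, so the ratio is $\ge 1$. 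Combined with the hypothesis $\binom{y}{k}\le\binom{x}{k-\Delta}$, this gives $\binom{y}{k}\le\binom{x+\Delta}{k}$, and strict monotonicity of $z\mapsto\binom{z}{k}$ on $z\ge k-1$ (where all factors $z,z-1,\ldots,z-k+1$ are nonnegative, and the polynomial is strictly increasing) yields $y\le x+\Delta$.

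The only subtlety, and the main (minor) obstacle, is ensuring we are in the regime in which the binomials are positive and monotonicity applies, namely $y\ge k-1$ and $x\ge k-\Delta-1$. These are natural assumptions that hold in the intended application inside the proof of Theorem~\ref{th:skk}: there $y=x_U$ is defined implicitly so that $\binom{x_U}{k-|U|-1}$ equals a positive edge-degree quantity in a regularized link, which forces $x_U$ into the monotone regime, and similarly for $x$. Once this regime is verified, the three steps chain together to prove the claim.
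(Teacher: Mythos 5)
Your proof is correct and takes a genuinely different route from the paper's. The paper writes $\binom{z}{a}/\binom{z}{b}$ as the product $\prod_{j=a+1}^{b}\frac{j}{z-j+1}$, observes that each factor is increasing in $j$ and decreasing in $z$, and then splits into two cases: if $y\le x$ the inequality follows by monotonicity alone (the $i^{-\Delta}$ factor is not even needed); if $y>x$ it combines the hypothesis $\binom{y}{k}\le\binom{x}{k-\Delta}$, the bound $\binom{y}{i}\ge\binom{x}{i}$, and the crude estimate $\binom{x}{i-\Delta}/\binom{x}{i}\le i^{\Delta}$. You instead apply the ``flip'' identity $\binom{z}{a}/\binom{z}{b}=\binom{b}{a}/\binom{z-a}{b-a}$ to both sides, which transfers all the $x$- and $y$-dependence into two binomials with the same lower index $k-i$, and then reduce the whole claim to a single monotone comparison $y\le x+\Delta$; this you extract from the hypothesis by observing $\binom{x}{k-\Delta}\le\binom{x+\Delta}{k}$. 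That reformulation is a nice conceptual gain---it makes explicit that the hypothesis and conclusion are both really saying $y\lesssim x+\Delta$---and it avoids the case split. Both arguments require the same implicit regime restriction (roughly $y\ge k-1$ and $x\ge k-\Delta-1$) for the monotonicity and positivity statements to hold; the claim as stated does not assume this (and can fail without it, e.g.\ $\binom{y}{i}<0$ when $y<i-1$), so you are right to flag it, and in this respect your write-up is actually more careful than the paper's. One small slip in your exposition: after writing $\binom{k}{i}/\binom{k-\Delta}{i-\Delta}=\frac{k(k-1)\cdots(k-\Delta+1)}{i(i-1)\cdots(i-\Delta+1)}\ge 1$, the parenthetical remark ``noting that $i(i-1)\cdots(i-\Delta+1)\le i^{\Delta}$'' is not needed to conclude that the right-hand side is at least $1$ (though it would give the sharper lower bound $k(k-1)\cdots(k-\Delta+1)$).
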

\begin{proof}
	First, note that
	\begin{equation}\label{eq:binomial-frac-bound}
	\frac{\binom{z}{a}}{\binom{z}{b}} = 
	\prod_{j=a+1}^{b} \frac{j}{x-j+1} 
	\le b^{b-a}
	\end{equation}
	for all $a \le b \le z$ with $a,b \in \N$. 
	Second, observe that $\frac{j}{x-j+1}$ is:
	\begin{enumerate}
		\item monotone decreasing in $x$,
		\item monotone increasing in $j$.
	\end{enumerate}
	Now, if $y \le x$ then $\frac{\binom{y}{i}}{\binom{y}{k}} \ge \frac{\binom{y}{i-\D}}{\binom{y}{k-\D}} \ge \frac{\binom{x}{i-\D}}{\binom{x}{k-\D}}$ using~(\ref{eq:binomial-frac-bound}) together with items~1 and~2, respectively,
	whereas if $y \ge x$ then
	$$\frac{\binom{y}{i}}{\binom{y}{k}} 
	\ge \frac{\binom{x}{i}}{\binom{x}{k-\D}} 
	= \frac{\binom{x}{i}}{\binom{x}{i-\D}} \cdot \frac{\binom{x}{i-\D}}{\binom{x}{k-\D}}
	\ge \frac{1}{i^{\D}}\cdot \frac{\binom{x}{i-\D}}{\binom{x}{k-\D}},$$
	where the first inequality follows from the statement's assumption,
	and the last inequality uses~(\ref{eq:binomial-frac-bound}).
\end{proof}

%
%

For the proof of Lemma~\ref{lemma:113} we use the following claim, which follows easily from Newton's generalized binomial theorem.
\begin{claim}\label{claim:Newton}
	For every real $x > 0$ we have $2^{x-1} < \sum_{i=0}^{\floor{x}} \binom{x}{i} \le 2^x$.
\end{claim}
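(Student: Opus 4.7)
The plan is to apply Newton's generalized binomial theorem, which for $x > 0$ gives $\sum_{i=0}^{\infty}\binom{x}{i} = (1+1)^x = 2^x$. Writing $f(x) := \sum_{i=0}^{\floor{x}}\binom{x}{i}$ and $g(x) := 2^x - f(x) = \sum_{i > \floor{x}}\binom{x}{i}$, both inequalities in the claim reduce to bounds on the tail $g(x)$.

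For the upper bound $f(x) \le 2^x$ I would show that $g(x) \ge 0$. When $x$ is a positive integer this is trivial since $g(x) = 0$. Otherwise, write $x = \floor{x} + \theta$ with $\theta \in (0,1)$; the product formula $\binom{x}{i} = \prod_{j=0}^{i-1}(x-j)/i!$ shows that for every $i \ge \floor{x}+1$ the sign of $\binom{x}{i}$ is $(-1)^{i-\floor{x}-1}$ (counted by the negative factors $x-j$ with $j > \floor{x}$), and that $|\binom{x}{i+1}|/|\binom{x}{i}| = (i-x)/(i+1) \in (0,1)$. Hence $g(x)$ is an alternating series whose first term $\binom{x}{\floor{x}+1}$ is positive and whose absolute values are strictly decreasing, so $g(x) > 0$ by the standard alternating-series estimate.

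For the lower bound $f(x) > 2^{x-1}$, I would induct on $\floor{x}$ via Pascal's identity $\binom{x+1}{i} = \binom{x}{i} + \binom{x}{i-1}$. Using $\floor{x+1} = \floor{x}+1$ and re-indexing the shifted sum (with $\binom{x}{-1} = 0$), a one-line computation gives
$$
f(x+1) \;=\; 2\,f(x) + \binom{x}{\floor{x}+1} \;\ge\; 2\,f(x),
$$
so the bound propagates from $x$ to $x+1$. The base case $0 < x \le 1$ is immediate: $f(x) = 1 > 2^{x-1}$ for $x \in (0,1)$, and $f(1) = 1 + 1 = 2 > 1$.

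No step in this plan looks difficult; the only place that requires a bit of care is verifying sign-alternation and the monotonicity of $|\binom{x}{i}|$ in the tail for the upper bound, but both follow immediately from the product formula for $\binom{x}{i}$.
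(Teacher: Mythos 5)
Your proof is correct, but it reaches both inequalities by routes that differ from the paper's. For the upper bound, the paper shows the tail $\sum_{i>\floor{x}}\binom{x}{i}$ is nonnegative by grouping consecutive pairs via Pascal's rule, $\binom{x}{j}+\binom{x}{j+1}=\binom{x+1}{j+1}$, with $j=\floor{x}+1,\floor{x}+3,\ldots$, and checking each $\binom{x+1}{j+1}\ge 0$ by counting negative factors; you instead observe the tail is a strictly alternating series with a positive leading term and decreasing absolute values and invoke the Leibniz estimate. Both work; your version is slightly more standard but implicitly leans on the fact that the terms tend to $0$ (which is fine here, since you are already using convergence of $\sum_i\binom{x}{i}$ at $z=1$ to write $g(x)=2^x-f(x)$, exactly as the paper does). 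For the lower bound, the paper gives a direct chain $2^{x-1}<2^{\floor{x}}=\sum_{i\le\floor{x}}\binom{\floor{x}}{i}\le\sum_{i\le\floor{x}}\binom{x}{i}$, using monotonicity of $\binom{y}{i}$ in $y$; you instead derive the recursion $f(x+1)=2f(x)+\binom{x}{\floor{x}+1}\ge 2f(x)$ from Pascal's identity and induct on $\floor{x}$ from the base case $x\in(0,1]$. The paper's lower-bound argument is shorter and needs no recursion, while yours is more "self-propagating" and reuses the same sign observation for $\binom{x}{\floor{x}+1}$ that your alternating-series step already established.
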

\begin{proof}
	We bound the summation in the statement from above and from below as follows;
	$$2^{x-1} < 2^{\floor{x}} \le \sum_{i=0}^{\floor{x}} \binom{\floor{x}}{i} 
	\le \sum_{i=0}^{\floor{x}} \binom{x}{i} 
	\le \sum_{i=0}^\infty \binom{x}{i} = (1+1)^x = 2^x ,$$
	where the last inequality follows from
	\begin{align*}
	\sum_{i=0}^\infty \binom{x}{i} - \sum_{i=0}^{\floor{x}} \binom{x}{i} 
	&= \sum_{j} \bigg(\binom{x}{j}+\binom{x}{j+1}\bigg) 
	= \sum_{j} \binom{x+1}{j+1}\\
	&= \sum_{j} \bigg(\frac{(x+1)x\cdots(x-\floor{x})}{(j+1)!} \prod_{k=\floor{x}+1}^{j-1} (x-k) \bigg) \ge 0
	\end{align*}
	with $j$ running over the values $\floor{x}+1,\floor{x}+3,\ldots$, 
	and where the first equality follows from Newton's generalized binomial formula.
\end{proof}

\begin{proof}[Proof of Lemma~\ref{lemma:113}]
	For real $z \ge 0$ we henceforth abbreviate $\binom{x}{\le z} = \sum_{i=0}^{\floor{z}} \binom{x}{i}$.
	If $k \ge \lceil \frac x 2 \rceil$ we are done since
	\begin{align*}
	2\sum_{i=0}^k \binom{x}{i}\g^i &\ge 2\sum_{i=0}^{\floor{\frac{x}{2}}} \binom{\floor{x}}{i}\g^i
	= \sum_{i=0}^{\floor{\frac{x}{2}}} \bigg(\binom{\floor{x}}{i}+\binom{\floor{x}}{\floor{x}-i}\bigg)\g^i
	\ge \sum_{i=0}^{\floor{\frac{x}{2}}} \bigg( \binom{\floor{x}}{i}\g^i+\binom{\floor{x}}{\floor{x}-i}\g^{\floor{x}-i} \bigg)\\
	&\ge \sum_{i=0}^{\floor{x}} \binom{\floor{x}}{i}\g^i = (1+\g)^{\floor{x}}
	\ge \frac12 (1+\g)^x \ge \frac12\binom{x}{\le x}^{\log(1+\g)}  \ge \frac12\binom{x}{\le k}^{\log(1+\g)} ,
	\end{align*}
	where the penultimate inequality follows from Claim~\ref{claim:Newton},
	and dividing over by $2$ gives the desired result.
	
	We thus assume $k \le \ceil{\frac{x}{2}}-1$ for the remainder of the proof.
	Put
	$$y=\log \binom{x}{\le k} \quad\text{ and }\quad K=\floor{y}+1.$$
	We have $0\le y \le x-1$, which follows using the fact that for $i \le \floor{\frac{x}{2}}$ we have $\binom{x}{i} \le \binom{x}{\floor{x}-i}$ as follows;
	\begin{align*}
	1\le 2\binom{x}{\le k} \le 2\binom{x}{\le\ceil{\frac{x}{2}}-1} \le \sum_{i=0}^{\ceil{\frac{x}{2}}-1} \bigg( \binom{x}{i} + \binom{x}{\floor{x}-i} \bigg) \le
	\binom{x}{\le x} \le 2^x,
	\end{align*}
	where the first inequality follows since $1 \le x$, the second inequality uses our assumption $k \le \ceil{\frac{x}{2}}-1$, and the last inequality uses Claim~\ref{claim:Newton}. 
	Furthermore, we have $K \ge k$, since otherwise $k \ge K+1$ and thus
	$$\binom{x}{\le k} \ge \binom{x}{\le K+1} \ge \binom{y+1}{\le K+1} = \binom{y+1}{\le \floor{y}+2} \ge \binom{y+1}{\le \floor{y}+1} > 2^y,$$
	a contradiction, where the second and third inequalities used the fact that for a positive number $t$, $\binom {t} {\lfloor t \rfloor + 1}$ is non-negative (and $K = \lfloor y \rfloor + 1$), and the last inequality uses Claim~\ref{claim:Newton}. 
	
	Let $a_0,\ldots,a_k$ and $b_0,\ldots,b_{K}$ be given by 
	$$a_i = \binom{x}{i}\text{ for $0 \le i \le k$,}\qquad b_i = \binom{y}{i} \text{ for $0 \le i \le K-1$, and $b_K=\sum_{i=0}^k a_i - \sum_{i=0}^{K-1} b_i$}.$$
	We have that $b_K \ge 0$ since
	$$\sum_{i=0}^{K-1} b_i = \binom{y}{\le y} \le 2^y = \binom{x}{\le k} = \sum_{i=0}^k a_i,$$
	where the inequality follows from Claim~\ref{claim:Newton}.
	Summarizing the properties of the sequences $(a_i)_{i=0}^k$ and $(b_i)_{i=0}^K$, we have $\sum_{i=0}^k a_i = \sum_{i=0}^{K} b_i$, and for every $0 \le i \le k-1$ we have $a_i \ge b_i$ (recall $x > y \ge 0$). 
	Denote $\D_i = a_i-b_i$ $(\ge 0)$ for every $0 \le i \le k-1$.
	We have 
	\begin{align*}
	\sum_{i=0}^{\floor y} \binom{y}{i}\g^i &\le \sum_{i=0}^K b_i \g^i = \sum_{i=0}^{k-1} (a_i-\D_i)\g^i + \sum_{i=k}^K b_i\g^i\\ 
	&\le \sum_{i=0}^{k-1} (a_i-\D_i)\g^i + \Big(\sum_{i=k}^K b_i\Big)\g^k
	= \sum_{i=0}^{k-1} (a_i-\D_i)\g^i + \Big(\sum_{i=0}^{k-1} \D_i + a_k\Big)\g^k\\
	&\le \sum_{i=0}^{k-1} a_i\g^i -\sum_{i=0}^{k-1}\D_i\g^k + \Big(\sum_{i=0}^{k-1} \D_i + a_k\Big)\g^k
	= \sum_{i=0}^k a_i \g^i = \sum_{i=0}^k \binom{x}{i} \g^i.
	\end{align*}
	We thus showed that
	$$\sum_{i=0}^k \binom{x}{i}\g^i \ge 
\sum_{i=0}^{\floor y} \binom{y}{i} \g^i \geq
	\sum_{i=0}^{\floor{y}} \binom{\floor{y}}{i} \g^i = (1+\g)^{\floor{y}}
	\geq  \frac12(1+\g)^{y} = \frac12\binom{x}{\le k}^{\log(1+\g)},$$
	which completes the proof.
\end{proof}


\begin{thebibliography}{99}
	\bibitem{Al}
	N. Alon, On the density of sets of vectors, {\em Discrete Math.} 46
	(1983), 199-202.
	\bibitem{AS}
	N. Alon and J. H. Spencer, {\em The Probabilistic Method, 
		Fourth Edition}, Wiley, 2016, xiv+375 pp.
	\bibitem{An}
	D. Angluin, Computational learning theory: survey and selected bibliography, 
	{\it Proc. 24th Annual ACM Symposium on Theory of Computing} 1992.
	\bibitem{BKK}
	J. Bourgain, J. Kahn and G. Kalai,
	Influential coalitions for Boolean Functions,
	arXiv 1409.3033.
	\bibitem{BR}
	B. Bollob\'as and A. J. Radcliffe,
	Defect Sauer results,
	{\it J. Combinatorial Theory Ser. A} 72 (1995), 189-208.
	\bibitem{Bo}
	J. A. Bondy, Induced subsets, {\it J. Combin. Theory Ser. B} 12
	(1972), 201-202.
	\bibitem{BG}
	B. Bukh and X. Goaoc,
	Shatter functions with polynomial growth rates,
	arXiv 1701.06632.
	\bibitem{CGN}
	O. Cheong, X. Goaoc, and C. Nicaud, Set systems and families of permutations with small traces, {\it European Journal of Combinatorics 34} (2013), 229-239.
	\bibitem{Fr}
	P. Frankl, On the trace of finite sets, {\it J. Combin. Theory Ser.
		A} 34 (1983), 41-45.
\bibitem{Ha}
E. F.  Harding, 
The number of partitions of a set of $N$ points in $k$ dimensions 
induced by hyperplanes, 
{\it Proc. Edinburgh Math. Soc.} (2) 15 1966/1967, 285--289. 

	\bibitem{Ka}
	G. O. H. Katona, A theorem on finite sets, in: {\em Theory of Graphs}
	(Erd\H{o}s, P. and Katona, G. O. H., eds.), Akad\'emiai Kiad\'o,
	Budapest.
	\bibitem{KKL}
	J. Kahn, G. Kalai and N. Linial , The influence of variables on
	Boolean functions, {\it Proc. 29th Annual Symposium on Foundations
		of Computer Science}, 1988.
	\bibitem{Kr} J. B. Kruskal, The number of simplices in a complex,
	in: {\em Mathematical Optimization Techniques}, Univ. California
	Press, Berkeley (1963), 251-278.
	\bibitem{Lovasz} L. Lov\'asz, Combinatorial Problems and Exercises,
	13.31, North-Holland, Amsterdam, 1979.
	\bibitem{Mat}
	J.\ Matou\v sek, Geometric Set Systems, {\it European Congress of
		Mathematics}, Springer (1998) 1–27.
\bibitem{Sa}
	N. Sauer, On the density of families of sets, 
	{\it J. Combin. Theory Ser. A} 13 (1972), 145-147.
	\bibitem{Sh}
	S. Shelah, A combinatorial problem; stability and order for
	models and theories in infinitary languages, {\it Pacific or. Math.} 41 (1972), 271- 276.
	\bibitem{VC}
	V. N. Vapnik and A. Ya. Chervonenkis,
	On the uniform convergence of relative frequencies of events to their probabilities, {\it Theory Probab. Appl.} 16 (1971), 264-280.
\end{thebibliography}
\end{document}